\documentclass[10pt,twoside]{article}
\usepackage{relsize}
\usepackage{artmods,macros}
\usepackage{amsmath,amssymb}
\usepackage[toc,page]{appendix}
\usepackage{algorithm}
\usepackage{algpseudocode}

\title{On Modified Newton Methods for Unconstrained Optimization Based on
         Lanczos Tridiagonalization\thanks{\footWASP}}
\author{Aban ANSARI-\"ONNESTAM\thanks{\footAO} \and Anders
  FORSGREN\thanks{\footAF} \and Philip E.~GILL\thanks{\footPEG}}

\def\footAF{Division of Numerical Analysis, Optimization and Systems
  Theory, Department of Mathematics, KTH Royal Institute of Technology, SE-100 44
            Stockholm, Sweden ({\tt andersf@kth.se}).}

 \def\footAO{Division of Applied Mathematics, Department of
            Mathematics, Link\"oping University, SE-581 83
            Link\"{o}ping, Sweden ({\tt aban.ansari-onnestam@liu.se}). }
\def\footWASP{The work of the first two authors was partially supported by the Wallenberg AI, Autonomous Systems and
Software Program (WASP) funded by the Knut and Alice Wallenberg Foundation.}

\def\footPEG{Department of Mathematics, University of California San Diego,
La Jolla, CA 92093-0112, USA ({\tt pgill@ucsd.edu}). }

\date{}

\makeatletter

\newcommand{\leqnomode}{\tagsleft@true}
\newcommand{\reqnomode}{\tagsleft@false}

\newcommand{\m}{\phantom-}

\newcommand{\pmat}[1]{\begin{pmatrix}#1\end{pmatrix}} 

\ifx   \innerprod\undefined%
   \def\innerprod(#1,#2){\langle#1,#2\rangle} 
\fi

\newcommand{\Newcommand}[2]%
   {\ifx#1\undefined \newcommand{#1}{#2} \else \renewcommand{#1}{#2} \fi}
\ifx\mod\undefined
  \newcommand{\mod}[1]{|#1|}
\else
  \renewcommand{\mod}[1]{|#1|}
\fi
\ifx\Range\undefined
  \newcommand{\Range}{\mathop{\operator@font{range}}}
\else
  \renewcommand{\Range}{\mathop{\operator@font{range}}}
\fi
\Newcommand{\Re} {\mathbb{R}}           
\Newcommand{\subject}{\mathop{\operator@font{subject\ to}}}  
\Newcommand{\epi}{\mathop{\operator@font{epi}}}
\Newcommand{\graph}{\mathop{\operator@font{graph}}}

\providecommand{\det}  {\mathop{\operator@font{det}}}
\providecommand{\diag} {\mathop{\operator@font{diag}}}
\providecommand{\dim}  {\mathop{\operator@font{dim}}}
\providecommand{\exp}  {\mathop{\operator@font{exp}}}

\providecommand{\rank} {\mathop{\operator@font{rank}}}
\providecommand{\vec}  {\mathop{\operator@font{vec}}}

\newsavebox\myboxA
\newsavebox\myboxB
\newlength\mylenA

\newcommand*\xoverline[2][0.75]{%
    \sbox{\myboxA}{$\m@th#2$}%
    \setbox\myboxB\null
    \ht\myboxB=\ht\myboxA%
    \dp\myboxB=\dp\myboxA%
    \wd\myboxB=#1\wd\myboxA
    \sbox\myboxB{$\m@th\overline{\copy\myboxB}$}
    \setlength\mylenA{\the\wd\myboxA}
    \addtolength\mylenA{-\the\wd\myboxB}%
    \ifdim\wd\myboxB<\wd\myboxA%
       \rlap{\hskip 0.5\mylenA\usebox\myboxB}{\usebox\myboxA}%
    \else
        \hskip -0.5\mylenA\rlap{\usebox\myboxA}{\hskip 0.5\mylenA\usebox\myboxB}%
    \fi}
\newcommand{\newstrut}{\rule[-1.25ex]{0pt}{4ex}}%
\newcommand{\newstrutl}{\rule[-1.25ex]{0pt}{3ex}}%
\newcommand{\newstrutu}{\rule{0pt}{3.25ex}}%

\newcommand{\abs}[1]{|#1|}
\newcommand{\projX}{\mathop{\operator@font{\mathbf{Proj}_\setX}}}
\newcommand{\ProjX}{\mathop{\operator@font{Proj_\setX}}}
\newcommand{\Proj}[1]{\mathop{\operator@font{Proj_{\mathit{#1}}}}}
\newcommand{\Prox}[1]{\mathop{\operator@font{Prox_{\mathit{#1}}}}}
\newcommand{\aff}{\mathop{\operator@font{aff}}}
\newcommand{\cl}{\mathop{\operator@font{cl}}}
\newcommand{\ri}{\mathop{\operator@font{ri}}}
\newcommand{\Angle}{\mathop{\operator@font{angle}}}
\newcommand{\argmin}{\mathop{\operator@font{argmin}}}
\newcommand{\argmax}{\mathop{\operator@font{argmax}}}
\newcommand{\conv}{\mathop{\operator@font{conv}}}
\newcommand{\convclosure}{\mathop{\operator@font{\overline{conv}}}}
\newcommand{\cone}{\mathop{\operator@font{conic}}}
\newcommand{\dom}{\mathop{\operator@font{dom}}}
\newcommand{\eig}{\mathop{\operator@font{eig}}}
\newcommand{\svSet}{\mathop{\operator@font{sv}}}

\newcommand{\interv}{\mathop{\operator@font{\mathbf{interv}}}}
\newcommand{\algmult}{\mathop{\operator@font{am}}}
\newcommand{\geomult}{\mathop{\operator@font{gm}}}
\newcommand{\dottimes}{\mathop{\mbox{\,\boldmath$\cdot$\,}}}
\renewcommand{\dottimes}{\,\mathop{\operator@font{\cdot}}\,}

\newcommand{\T}{^{\hthinsp\mathrm{\scriptstyle T}}}

\newcommand{\invT}{^{-\mathrm{\scriptstyle T}}}

\newcommand{\realpart}{\mathop{\operator@font{Re}}\,}
\newcommand{\imaginarypart}{\mathop{\operator@font{Im}}\,}
\newcommand{\sgap}{\;}
\newcommand{\mgap}{\;\;}
\newcommand{\bgap}{\;\;\;}

\newcommand{\minimize}[1]{{\displaystyle\minim_{#1}}}

\newcommand{\Bigp}[1]{\bigl(#1\bigr)}

\newcommand{\tmattt}[4]{\big(\, #1 \ \  #2 \ \  #3 \ \  #4\,\big)}

\newcommand{\mattt}[4]{\Bigp{\; #1 \quad #2 \quad #3 \quad #4\;}}

\newcommand{\Cpp}{C\raise3pt\hbox{\tiny++}}
\newcommand{\blackslug}{\hbox{\hskip 1pt \vrule width 4pt height 6pt depth 1.5pt
  \hskip 1pt}}
\newcommand{\boxit}[1]{\vbox{\hrule\hbox{\vrule\hskip 3pt
  \vbox{\vskip 3pt #1 \vskip 3pt}\hskip 3pt\vrule}\hrule}}

\newcommand{\maxim}{\mathop{\operator@font{maximize}}}
\newcommand{\minim}{\mathop{\operator@font{minimize}}}
\newcommand{\cond}{{\operator@font{cond}}}
\newcommand{\defined}{\mathop{\,{\scriptstyle\stackrel{\triangle}{=}}}\,}
\newcommand{\definedas}{\defined}
\newcommand{\closure}{\mathop{\operator@font{closure}}}
\newcommand{\sep}{\mathop{\operator@font{sep}}}
\newcommand{\dist}{\mathop{\operator@font{dist}}}
\newcommand{\In}{\mathop{\operator@font{In}}}
\newcommand{\Signature}{\mathop{\operator@font{Sn}}}
\newcommand{\boundary}{\mathop{\operator@font{bnd}}}
\newcommand{\interior}{\mathop{\operator@font{int}}}
\newcommand{\Int}{\mathop{\operator@font{int}}}
\newcommand{\Null}{\mathop{\operator@font{null}}}
\newcommand{\columnRank}{\mathop{\operator@font{column\ rank}}}
\newcommand{\rowRank}{\mathop{\operator@font{row\ rank}}}
\newcommand{\Rank}{\mathop{\operator@font{rank}}}
\newcommand{\op}{\mathop{\operator@font{op}}}
\newcommand{\re}{\mathop{\operator@font{re}}}
\newcommand{\range}{\mathop{\operator@font{range}}}
\newcommand{\sign}{\mathop{\operator@font{sign}}}
\newcommand{\sgn}{\mathop{\operator@font{sgn}}}
\newcommand{\Span}{\mathop{\operator@font{span}}}
\newcommand{\trace}{\mathop{\operator@font{trace}}}
\newcommand{\drop}{^{\null}}
\newcommand{\etal}{et al.}  

\newcommand{\Grad}{\nabla\!}
\newcommand{\Hess}{\nabla^2\!}

\newcommand{\cents}{\hbox{\operator@font{\rlap/c}}}
\newcommand{\half}  {{\textstyle\frac12}}

\newcommand{\inv}{^{-1}}

\newcommand{\norm}[1]{\|#1\|}

\newcommand{\Seq}[1]{\{\hthinsp#1\hthinsp\}}

\newcommand{\Set}[1]{\big\{\, #1 \,\big\}}
\newcommand{\spose}[1]{\hbox to 0pt{#1\hss}}

\newcommand{\words}[1]{\mgap\text{#1}\mgap}
\newcommand{\wordss}[1]{\quad\text{#1}\quad}

\newcommand{\hthinsp}{\mskip  1   mu}    
\newcommand{\pthinsp}{\mskip  2   mu}    

\newcommand{\dr}[1]{_{#1}\drop}

\newcommand{\supmax}{^{\operator@font{max}}}
\newcommand{\supmin}{^{\operator@font{min}}}
\newcommand{\submax}{_{\operator@font{max}}}
\newcommand{\submaxd}{_{\operator@font{max}}^{\null}}
\newcommand{\submin}{_{\operator@font{min}}}
\newcommand{\submind}{_{\operator@font{min}}^{\null}}

\newcommand{\subplus }{_{\scriptscriptstyle +}}

\newcommand{\A}{_{\scriptscriptstyle A}}

\newcommand{\C}{_{\scriptscriptstyle C}}

\newcommand{\kp}[1]{_{k+#1}}

\newcommand{\jp}[1]{_{j+#1}}
\newcommand{\jm}[1]{_{j-#1}}
\newcommand{\ip}[1]{_{i+#1}}
\newcommand{\im}[1]{_{i-#1}}
\newcommand{\kd}{_k\drop}

\newcommand{\id}{_i\drop}

\newcommand{\idm}[1]{_{i-#1}\drop}
\newcommand{\jd}{_j\drop}
\newcommand{\jdp}[1]{_{j+#1}\drop}
\newcommand{\jdm}[1]{_{j-#1}\drop}

\newcommand{\M}{_{\scriptscriptstyle M}}

\Newcommand{\R}{_{\scriptscriptstyle R}}

\newcommand{\starsymbol}{\ast}
\newcommand{\superstar}{^\starsymbol}

\newcommand{\SUPERSTAR}{^{\raise 0.5pt\hbox{$\starsymbol$}}}
\renewcommand{\SUPERSTAR}{\superstar}

\newcommand{\pstar}{p\superstar}

\newcommand{\deltahat}{\skew6\widehat\delta}

\newcommand{\gammabar}{\bar\gamma}

\newcommand{\omegabar}{\bar\omega}

\newcommand{\sigmabar}{\skew{2.8}\bar\sigma}

\providecommand{\varGamma}  {{\mathit\Gamma}}

\newcommand{\Gammait}{\varGamma}

\newcommand{\setX}{\mathcal{X}}

\newcommand{\subS}{_{\scriptscriptstyle S}}

\newcommand{\dbar}{\bar d}

\newcommand{\Htilde}{\widetilde H}

\newcommand{\rbar}{\skew3\bar r}

\newcommand{\rtilde}{\skew3\widetilde r}

\newcommand{\cf}{\textrm{cf.}}

\providecommand{\CUTEst}{{\small CUTE}st}

\providecommand{\GLTR  }{{\small GLTR}}

\providecommand{\GLTR}{{\small GLTR}}

\makeatother

\input algmacros

\newcommand{\obj}{_{\rm{obj}}}
\newcommand{\res}{_{\rm{res}}}

\let\strutu=\strut
\newlength{\argwidth}
\newcommand{\tbc}[2]{
\settowidth{\argwidth}{#1}
\makebox[\argwidth][c]{#2}}

\newcommand{\chibar}{\bar\chi}
\newcommand{\psibar}{\bar\psi}

\newcommand{\LDLT}{LDL$^{\mathrm{T}}$}

\newcommand{\objQP}{\varphi}
\newcommand{\objQPtilde}{\widetilde\varphi}

\newcommand{\eqq}{=\!=}
\newcommand{\cgstep}{\omega}
\newcommand{\cgstepbar}{\omegabar}
\newcommand{\subtol}{_{\mathit{tol}}}

\newcommand{\md}{_m\drop}

\newcommand{\mdm}[1]{_{m-#1}\drop}

\renewcommand{\Grad}{\nabla}

\begin{document}

\maketitle\thispagestyle{empty}

\begin{abstract}
\noindent
For a strongly convex unconstrained minimization problem, Newton's method
with a line search offers rapid convergence to a minimizer by incorporating
second-order information. However, for a general unconstrained problem the
Hessian need not be positive definite, which can lead to the inability to
find a search direction or to search directions that fail to produce
descent. Modified Newton methods are designed to handle this
situation. This paper considers modifications to a
Newton-conjugate-gradient approach based on Lanczos tridiagonalization.
Existing methods are extended using dynamic modification schemes that
provide a descent direction while keeping the modification bounded. A
Lanczos-based scheme and a conjugate-gradient (CG)-based scheme are
formulated and analyzed. These methods are mathematically equivalent in
exact arithmetic, but their practical behavior differs in finite
precision. Finally, it is shown how the convergence and computational cost
are affected by varying the amount of additional curvature introduced and
by allowing the early termination of the inner iterations.
\end{abstract}

\begin{keywords}
unconstrained optimization, conjugate-gradient methods, line-search
methods, Lanczos tridiagonalization, modified Newton methods
\end{keywords}

\section{Introduction}
This paper concerns the formulation and analysis of  modified Newton methods
for the unconstrained optimization problem
\[
     \minimize{x\in \Re^n}\mgap f(x),
\]
where $f$ is a twice-continuously differentiable scalar-valued
function.
The focus is on line-search methods, which generate a sequence $\Seq{x_k}$
with $x\kp1 = x_k + \alpha_k p_k$, where $\alpha_k$ is a positive scalar
and $p_k$ is a direction of decrease for $f$ at $x_k$.

If the Hessian matrix $\Hess f(x_k)$ is sufficiently positive definite
(i.e., the least eigenvalue is sufficiently positive), the search direction
$p_k$ may be computed by solving the Newton equations $\Hess f(x_k) p_k = -
\Grad f(x_k)$, which constitute the first-order optimality conditions for
the minimizer of the quadratic model
\begin{equation}    \label{eqUC:quadratic-model}
   \objQP_k(p) = f(x_k) + p\T\Grad f(x_k) + \tfrac12 p\T \Hess f(x_k) p.
\end{equation}
The Newton equations may be solved directly using conventional matrix
factorization methods.  For example, an efficient method is based on the
\LDLT{} factorization $\Hess f(x\kd) = L\kd D\kd L_k\T$ in which $L_k$ is
unit lower triangular and $D_k$ is diagonal.  If the number of variables is
large and $\Hess f(x\kd)$ is sparse, then there are very efficient
sparse-matrix methods for computing the factors of a symmetrically permuted
version of $\Hess f(x\kd)$.  The efficiency of these methods follows from
the fact that there exist permutations such that $L_k$ and $D_k$ are also
sparse. However, factorization methods may be impractical if $n$ is large
and $\Hess f(x\kd)$ is not sparse.  Such matrices arise in methods for
large-scale constrained minimization that involve the solution of equations
with matrices of the form $Z\T\Hess f(x) Z$, where $\Hess f(x_k)$ and $Z$
may be sparse but the product is dense.  These cases are examples of
problems for which iterative methods for the solution of the relevant
equations are the only option.  In this context the matrix associated with
the linear equations is used only in the computation of matrix-vector
products.

For a nonconvex objective function, the Hessian may not be positive
definite. As a result, the conventional Newton direction may fail to be a
descent direction or, if the Hessian is singular, the Newton equations may
not have a solution. Modified Newton methods handle this problem by
changing the Newton equations to obtain a more suitable search
direction. The aim is to improve the behavior of the method when the
Hessian is not positive definite, while retaining the fast local
convergence of Newton's method when the Hessian is sufficiently positive
definite.

A common approach is to replace the Hessian by a related positive-definite
matrix. Early Newton-type methods based on modified Cholesky factorizations
were developed by Gill and Murray \cite{GilM74}. Kaniel and
Dax~\cite{KanD79} proposed a related approach based on a symmetric
decomposition of the Hessian, which is used to construct a descent
direction when the Hessian is not positive definite. This idea was later
developed further through alternative ways of modifying the
factorization. Examples include the methods of Schnabel and Eskow
\cite{SchE90,SchE99} and Cheng and Higham \cite{CheH98}. Fang and
O'Leary~\cite{FanO08} give a broad overview and comparison of modified
Cholesky methods.

Other methods retain and use information about negative curvature instead
of removing it completely. For example, Forsgren, Gill, and Murray
\cite{ForGM95} use a partial Cholesky factorization to compute both a
descent direction and a direction of negative curvature. Mor\'{e} and
Sorensen \cite{MorS79} use a symmetric indefinite factorization to obtain
directions of negative curvature. For large-scale problems, similar ideas
appear in Newton-CG methods, where conjugate gradients are used to find an
approximate solution of a damped or regularized Newton system while also
detecting nonpositive curvature. The method may then use either an
approximate Newton direction or a direction of negative curvature. Such
approaches can also provide guarantees for approaching points that satisfy
approximate second-order optimality conditions (see Royer, O'Neill and
Wright~\cite{RoyOW20} and Yao \etal~\cite{YaoXRWM23}).

Regularized and hybrid methods provide other ways of handling an indefinite
Hessian. These include adaptively shifting the Hessian and combining
regularized Newton directions with directions of negative curvature (see,
e.g., Shen, Chen and Liang~\cite{SheCL12}, Yang~\cite{Yan15}, Zhu and
Xiao~\cite{ZhuX24}, and Zhou \etal~\cite{ZhoXLBDZ25}). Although these
methods construct their search directions in different ways, they all aim
to produce reliable steps when the Hessian is indefinite and to recover
Newton-like behavior near a sufficiently regular local minimizer.

This work builds on the modified Newton method proposed by
Nash~\cite{Nas84}, which applies a two-stage modification to the Lanczos
tridiagonalization of the Hessian. The modification is constructed so that
the resulting matrix retains sufficient positive curvature while the
magnitude of the applied modifications remains bounded. As the method
operates on the tridiagonal matrix generated by the Lanczos process, it
requires only quantities that become available during the Lanczos
iterations.

In Nash's original method, a fixed positive parameter $\delta$ is used both
to determine whether a modification is required and to specify a lower
bound on the diagonal entries of the \LDLT{} factorization of the modified
tridiagonal matrix. Choosing a small value of $\delta$ avoids unnecessary
modifications, but may produce a modified matrix with very small positive
curvature in several directions. This observation motivates the separation
of the threshold used to determine whether the curvature is sufficiently
large from the target curvature imposed when a modification is required.

In this paper, Nash's two-stage modification of the tridiagonal matrix is
extended by introducing iteration-dependent target curvatures that are
distinct from the fixed curvature threshold.  Bounds on the resulting
modifications are established when the sequence of target curvatures is
uniformly bounded.  Two choices of the iteration-dependent target curvature
are proposed, both of which use information from the current tridiagonal
matrix.  In addition, a gradient-based CG formulation of the modification
is derived and shown to be equivalent to the Lanczos formulation in exact
arithmetic.  Finally, numerical results are presented to facilitate a
comparison of the proposed methods and demonstrate the effects of using
several criteria for the early termination of the inner iterations.

The remainder of the paper is organized as
follows. Section~\ref{secUC:notation} introduces the notation used
throughout the paper. Section~\ref{secUC:background} gives a brief review
of the gradient-based method of conjugate gradients (CG) and its
Lanczos-based formulation, and describes the modification proposed by
Nash. Section~\ref{secUC:lanczos} introduces the iteration-dependent target
curvatures and establishes a bound on the resulting
modifications. Section~\ref{secUC:cg} derives the equivalent gradient-based
CG formulation. Section~\ref{sec:numerical} presents the numerical results,
and Section~\ref{secUC:conclusion} provides some concluding remarks.

\section{Notation}\label{secUC:notation}
In order to simplify the notation, the discussion will focus on the generic
quadratic function
\[
  \objQP(p)  = g\T p + \half p\T H p,
\]
where $g$ and $H$ denote the gradient $\Grad f(x_k)$ and Hessian $\Hess
f(x_k)$, respectively (\cf~(\ref{eqUC:quadratic-model})).  The methods are
iterative, and given $p_0\in \Re^n$ generate a finite sequence of inner iterates
\[
  p\jp1 = p_j + \cgstep_j q_j,     \words{for} j = 0, 1, \dots, \words{with}
   q_j\in\Re^n \words{and} \cgstep_j\in\Re.
\]
The vector $r(p)$ is used to denote $g + H p$, the gradient of $\objQP$,
with $r_j = r(p_j)$.  Under the assumption that $H$ is positive definite,
if $r(p_j) = 0$ then $p_j$ is the minimizer $\pstar$ and the iterative
method terminates.  Unless otherwise stated, the scalar $\cgstep$ denotes
the step length in the inner (Lanczos) iterations.  For clarity, the index
used for $p$ is $j$ for the inner iterations and $k$ for the outer
iterations.

Unless explicitly indicated otherwise, $\norm{\cdot}$ denotes the vector
two-norm or its induced matrix norm.  The least eigenvalue of a symmetric
matrix $H$ will be denoted by $\lambda\submin(H)$.  The vectors $e$ and
$e_j$ denote, respectively, the column vector of ones and the $j$th column
of the identity matrix $I$; the dimensions of $e$, $e_j$ and $I$ are
defined by the context.

\section{Preliminaries}\label{secUC:background}
This section starts with a review of the class of conjugate-gradient
iterative methods for the solution of the positive-definite Newton
equations $\Hess f(x_k) p_k = - \Grad f(x_k)$.  This is followed by a
review of iterative methods based on the Lanczos process
(Lanczos~\cite{Lanc50}) and it is shown how these methods may be used to
formulate iterative methods in which an indefinite $\Hess f(x\kd)$ is
modified implicitly to be positive definite.  In all the methods to be
considered, the matrix associated with the linear equations is used only in
the computation of matrix-vector products.

\subsection{The method of conjugate gradients.}
All the methods considered are either explicitly or implicitly related
to the method of conjugate gradients proposed by Hestenes and
Stiefel~\cite{HesS52}.  The method minimizes the quadratic function
$\objQP(p)$ by generating a sequence of search directions $\Set{q_j}$ that
are conjugate with respect to $H$, i.e., $q_i\T H q\jd = 0$ for $i \ne j$.
Starting from an initial point $p_0$, the iterates are generated according
to
\[
   p_j = p\jm1 + \cgstep\jm1 q\jm1,
\]
where $\cgstep\jm1$ is the step length obtained from an exact line search
along $q\jm1$. The search directions are constructed iteratively from the
gradient vectors $r_0,r_1,\dots,r\jm1$ so that the resulting iterates
minimize $\objQP(p)$ over a sequence of expanding subspaces.  In exact
arithmetic, the method terminates at the minimizer after at most $m$
iterations, where $m \le n$. The method is summarized in
Algorithm~\ref{algUC:CG-classical-cg}. To distinguish this formulation from
the Lanczos-based method of conjugate gradients, the procedure in
Algorithm~\ref{algUC:CG-classical-cg} will be referred to as
\texttt{grad\_CG}.

\begin{algorithm}[ht]
\caption{\label{algUC:CG-classical-cg}
\hspace{-3pt}.\mgap The method of conjugate gradients.\algstrut}

\begin{algorithmic}[0]
       \Procedure{$[p] = \ $\algname{grad\_CG}}{$H$, $g$, $p_0$, $\tau\subtol$\Astrutu}
\State    \Restrictions{$\tau\subtol > 0$, \sgap $H$ positive definite};
\State    $r_0 = g + H p_0$;                     \agap
             $j = 0$;
          \While{$\norm{r_j} > \tau\subtol$}
             \If{\ent{j \eqq 0}}
\State          $q_j = -r_j$;
             \Else
\State          $q\jd = -r\jd + \big(r_j\T H q\jdm1/q\jm1\T H q\jdm1\big) q\jdm1$;
             \EndIf
\State       $\cgstep\jd = -r_j\T q\jd/q_j\T H q\jd$;    \agap
             $p\jdp1    = p\jd + \cgstep\jd q\jd$;       \agap
             $r\jdp1    = r\jd + \cgstep\jd H q\jd$;
\State       $j        \gets j + 1$;
          \EndWhile
\State    $p = p_j$;
       \EndProcedure
\end{algorithmic}
\end{algorithm}
The subspace spanned by the search directions $q_0$, $q_1$, \dots, $q\jdm1$
is precisely the Krylov subspace generated by $H$ and the initial gradient
$r_0$, namely
\[
  \Span\Set{q_0, q_1, \dots, q\jdm1 } = \Span\Set{r_0, H r_0, \dots, H^{j-1}r_0 }.
\]
It follows that the quadratic function is minimized over a sequence of expanding
Krylov subspaces.

\subsection{A method of conjugate gradients based on Lanczos tridiagonalization.}
An alternative implementation of the method of conjugate gradients may be
derived using Lanczos tridiagonalization (see Paige and
Saunders~\cite{PaiS75}).  Given $v_1 = -g/\beta_1$ with $\beta_1 =
\norm{g}$, the Lanczos vectors are computed from the sequence
\[
  \beta_2 v_2     = H v_1 - \gamma_1 v_1,  \qquad
  \beta\jp1 v\jp1 = H v_j - \gamma_j v_j - \beta_j v\jm1,
                               \quad j = 2, 3, \dots,
\]
where $\beta\jp1$ is chosen to give $\norm{v\jp1}=1$, and $\gamma\jd =
v_j\T H v\jd$.  The Lanczos process generates a sequence of tridiagonal
matrices $T_j$, $j = 1$, $2$, \dots, $m$, such that
\[ 
  H V\jd = V\jd T\jd + \beta\jdp1 v\jdp1 e_j\T,
\] 
with
\[ 
   V_j = \tmattt{ v_1}{ v_2}{ \cdots}{ v_j}          \wordss{and}
   T_j = \pmat{ \gamma_1 & \beta_2  &          &         &         &         \\
                \beta_2  & \gamma_2 & \beta_3  &         &         &         \\
                         & \beta_3  & \gamma_3 & \beta_4 &         &         \\
                         &          & \ddots   & \ddots  & \ddots  &         \\
                         &          &          & \ddots  & \ddots  & \beta_j \\
                         &          &          &         & \beta_j & \gamma_j  }.
\] 
In exact arithmetic, the Lanczos iteration will terminate with $\beta_{m+1}
= 0$ for some integer $m \le n$, which implies that $T\md = V_m\T H
V\md$ with $V_m\T V\md = I\md$.

A set of conjugate vectors may be computed using a suitable factorization
of $T_j$.  If $H$ is positive definite, then so is $T_j$, which has a
unique \LDLT{} factorization $T\jd = L\jd D\jd L_j\T$, with $D\jd$ diagonal
and $L\jd$ unit lower triangular.  Given $T_j$, the matrix $T\jp1$ has the
form
 \[
  T\jp1
   = \pmat{ L\jd                  &  0           \\[3pt]
            l\jdp1 e_j\T          &  1           }
     \pmat{ D\jd                  &  0           \\[3pt]
            0                     &  d\jdp1      }
     \pmat{ L_j\T                 &  l\jdp1 e\jd \\[3pt]
            0                     &  1           },
\]
where $l\jdp1 = \beta\jdp1/d\jd$ and $d\jdp1 = \gamma\jdp1 -
\beta\jp1^2/d\jd$.  If $j\ge 1$ then $l\jp1$ and $d\jp1$ are associated
with the \LDLT{} factors of a $2\times2$ matrix, i.e.,
\[
   \pmat{ d_j        & \beta\jp1 \\
          \beta\jp1  & \gamma\jp1  }
   =  \pmat{  1      & 0          \\
              l\jp1  & 1            }
      \pmat{ d_j     & 0          \\
               0     & d\jp1        }
      \pmat{ 1       & l\jp1      \\
             0       & 1            }.
\]
It remains to compute the conjugate vectors from the Lanczos vectors and
\LDLT{} factors.  With the definition $Q\jd = V\jd L_j\invT$, it holds that
\[ 
  Q_j\T H Q\jd
   =  L_j\inv V_j\T H V\jd  L_j\invT
   =  L_j\inv T\jd  L_j\invT
   =  D\jd,
\] 
and the columns of $Q\jd = V\jd L_j\invT$ are mutually conjugate with
respect to $H$.  An important property of $Q\jd$ is that its columns
may be computed using a simple two-term recurrence involving the Lanczos
vectors. If the columns of $Q_j$ are denoted by $q_0$, $q_1$, \dots,
$q\jm1$, then
\[
  \mattt{v\dr1}{v\dr2}{\cdots}{v\jd}
     = V\jd = Q\jd L_j\T
     = \mattt{q\dr0}{l\dr2 q\dr0 + q\dr1}{\cdots}{l\jd q\jdm2 + q\jdm1},
\]
which gives the two-term recurrence
\[ 
   q_0 = v_1, \qquad q_i = v\ip1 - l\ip1 q\im1, \mgap  i = 1, 2, \dots, j-1.
\] 
When an exact line search is used, the step length may be calculated
iteratively as
\begin{equation}                     \label{eqUC:def-cgstepj}
   \cgstep\jm1
       = \begin{cases}
               \m \phantom(\beta_1/d_1     & \hbox{if $j=1$;} \\
                - (\beta_j/d_j)\cgstep\jm2  & \hbox{otherwise.}
         \end{cases}
\end{equation}
With this method, the iterates $p\jd = p\jdm1 + \cgstep\jdm1 q\jm1$
minimize $\objQP$ over the same sequence of expanding subspaces as the
method of conjugate gradients, i.e., the iterates are identical. The
scalars $\beta_j$ and $d_j$ are positive, and the expression
(\ref{eqUC:def-cgstepj}) implies that the members of the sequence $\Set{
  \cgstep_i }$ alternate in sign and are \emph{not} the same as the step
lengths of the conjugate-gradient method.  A simple change to the Lanczos
process can be used to provide positive step lengths at every iteration.
In the computation of $\beta\jp1$ as $\norm{H v_j - \gamma_j v_j - \beta_j
  v\jm1}$ there is an implicit choice of sign for the calculation of the
norm.  If $\beta\jp1$ is computed as $-\norm{H v_j - \gamma_j v_j - \beta_j
  v\jm1}$ then $\cgstep\jm1 > 0$ for all $j > 0$.
A Lanczos-based method of conjugate gradients is summarized in
Algorithm~\ref{algUC:Lanczos-CG}.

\begin{algorithm}[ht] 
  \caption{\label{algUC:Lanczos-CG}
\hspace{-3pt}.\mgap Lanczos-based method of conjugate gradients.\algstrut}

\begin{algorithmic}[0]
      \Procedure{[$p] =\  $\algname{lanczos\_CG}}{$H$, $g$, $\tau\subtol$, $j\submax$\Astrutu}
\State     \Restrictions{$g \ne 0$, \sgap $H$ positive definite \sgap $\tau\subtol > 0$, \sgap $j\submax > 0$;}
\State     $p_0 = 0$;                         \agap
               $v_0 = 0$;
\State     $\beta\dr1 = \norm{g}$;            \agap
              $v\dr1 = -g/\beta\dr1$;         \agap
              $\gamma\dr1 = v_1\T H v\dr1$;   \agap
              $q\dr0 = v\dr1$;
\State     $d_1 = \gamma_1$;
\State     $j = 1$;
           \While{\ent{j < j\submax}}
\State        $s\jp1 = H v_j - \gamma_j v_j - \beta_j v\jm1$; \agap
                    $\beta\jp1 = - \norm{s\jp1}$;                    \label{algUC:Lanczos-CG-Step-7}
              \If{\ent{\mod{\beta\jp1} > \tau\subtol}}
\State           $v\jdp1 = s\jdp1/\beta\jdp1$;                \agap
                    $\gamma\jdp1 = v\jp1\T H v\jdp1$;
\State           $d\jp1 = \gamma\jdp1 - \beta\jp1^2/d\jd$;    \agap
                    $l\jp1 = \beta\jp1/d_j$;
\State           $q_j   = v\jp1 - l\jp1 q\jm1$;
              \EndIf
\State        \IIf{\ent{j = 1}} \ent{\cgstep_0 = \beta_1/d_1}  \IElse{\ent{\cgstep\jm1 = -  \cgstep\jm2\beta_j/d_j}};
\State        $p_j = p\jm1 + \cgstep\jm1 q\jm1$;   \Comment{The gradient norm at $p_j$ is $\mod{\beta\jp1}\,\cgstep\jm1$\,}
\State        \IF{} \ent{\mod{\beta\jp1} \le \tau\subtol} \ent{\BREAK};
\State        $j \gets j+1$;
           \EndWhile
\State     $p = p_j$;
       \EndProcedure
  \end{algorithmic}
\end{algorithm}

\subsection{Nash's modified Lanczos method}
An alternative to the conventional modified \LDLT{} factorization was
proposed by Nash~\cite{Nas84}. The method computes a diagonal modification
in a sequential two-stage procedure, using only the information from the
rows and columns of $T$ that have been computed thus far. Each diagonal
entry is modified twice -- the first modification ensures that the resulting
matrix is sufficiently positive definite, while the second controls the
magnitude of subsequent modifications by bounding the correction required
at the next iteration.

Initially, the value of $\gamma_1$, the leading diagonal of $T$, is
computed. If $H$ is not positive definite, this value may be negative or
zero.  To allow for this, the least nonnegative scalar $\sigmabar_1$ is
defined such that $\gammabar_1 = \gamma_1 + \sigmabar_1 \ge \delta$, i.e.,
\[
   \sigmabar_1 =  \max\Set{\gamma_1,\delta} - \gamma_1,
\]
where $\delta >0$ is chosen to be the smallest allowable positive value of
the modified diagonal entry.  (The value of $\delta$ determines the
minimum distance of the modified matrix to singularity.)

Once $v_2$ has been computed the new tridiagonal matrix may be modified if
necessary as
\[
    \pmat{ \dbar_1  & \beta_2    \\
           \beta_2  & \gamma_2   } +
    \pmat{ \sigma_1 &  0         \\
            0       & \sigmabar_2  }
 =  \pmat{  1       & 0          \\
           l_2      & 1            }
    \pmat{ d_1      & 0          \\
            0       & \dbar_2      }
    \pmat{  1       & l_2        \\
            0       & 1            },
\]
where $\sigma_1$ and $\sigmabar_2$ are nonnegative scalars computed so
that the quantity $\sigma_1 + \sigmabar_2$ is minimized. As $\dbar_1 =
\gamma_1 +\sigmabar_1$ (with $\dbar_1 \ge \delta$), this implies that
$\gamma_1$ may undergo two modifications during the factorization, which is
the reason for the use of bars to denote \emph{intermediate} values that
may be modified again at the next stage of the factorization.  The $j$th
stage ($j \ge 2$) involves the computation of the factorization
\[
    \pmat{ \dbar_j    & \beta\jp1   \\
           \beta\jp1  & \gamma\jp1   } +
    \pmat{ \sigma_j   &  0           \\
            0         & \sigmabar\jp1  }
 =  \pmat{ 1          & 0            \\
           l\jp1      & 1              }
    \pmat{ d_j        & 0            \\
              0       & \dbar\jp1      }
    \pmat{ 1          & l\jp1        \\
           0          & 1              },
\]
where $\sigma_j$ and $\sigmabar\jp1$ are nonnegative scalars defined so
that the quantity $\sigma_j + \sigmabar\jp1$ is minimized.  If the quantity
$\dbar\jp1$ is denoted by $\phi\jp1(\sigma_j, \sigmabar\jp1)$ to indicate
the dependence on $\sigma_j$ and $\sigmabar\jp1$, then
\[
  \phi\jdp1(\sigma\jd,\sigmabar\jdp1)
      \definedas  \dbar\jdp1
      = \gamma\jdp1 + \sigmabar\jdp1 - \beta\jp1^2/(\dbar\jd + \sigma\jd).
\]
Note that $\phi\jdp1(0,0) = \gamma\jdp1 - \beta\jp1^2/\dbar\jd$ is the
Schur complement computed by a conventional Cholesky factorization. If
$\phi\jp1(0,0)\ge \delta$, then $\phi\jdp1(0,0)$ is sufficiently positive,
$\sigma_j = \sigmabar\jp1 = 0$ and the factorization is not modified.
Otherwise, the factors are modified by finding $\sigma_j$ and
$\sigmabar\jp1$ that solve the subproblem
\begin{equation} \label{eqUC:Nash-subproblem}
\begin{aligned}
\minimize{\sigma_j,\sigmabar\jp1}
           &\bgap \sigma_j +  \sigmabar\jp1                                   \\
\subject   &\bgap \sigma_j \ge 0, \bgap \sigmabar\jp1  \ge 0, \bgap
                      \phi\jp1(\sigma_j,\sigmabar\jp1) \ge \delta.
\end{aligned}
\end{equation}

The search directions and iterates computed with the modified tridiagonal
matrix will then coincide with the iterates generated by
Algorithm~\ref{algUC:Lanczos-CG} using the modified Hessian
\[
  \Htilde\mdm1 = H + \sum_{i=1}^{m-1} (\sigmabar\id+\sigma\id) v\id v_i\T + \sigmabar_m\drop v_m\drop  v_m\T,
\]
where $m$ denotes the index such that $\beta_{m+1} = 0$.

\section{Extensions of Nash's Method}\label{secUC:lanczos}
Nash's two-stage modification ensures that the pivots of the modified
$LDL\T$ factorization are bounded below by a prescribed constant
$\delta>0$, while also bounding the magnitude of the diagonal
modification. However, if $\delta$ is fixed at a small value, the
resulting matrix may have very small positive curvature in the direction of
several Lanczos vectors. The solution of the modified Newton equations may
then have large components in these directions, i.e., directions in which
the original Hessian has small or negative curvature.  Moreover, the
resulting solution may have an excessively large norm. The line search in
the outer iterations may then require many function evaluations before an
acceptable step size is obtained.

Choosing a uniformly larger value of $\delta$ could reduce this effect, but
may impose unnecessary modifications. This justifies the separation of the threshold
used to detect insufficient curvature from the target curvature imposed
after modification. A fixed small value of $\delta$ is retained to
determine whether the current Schur complement is sufficiently positive. If
this test fails, the target curvature is replaced by an iteration-dependent
value $\delta\jp1 \ge \delta$. Apart from the choice of $\delta\jp1$,
the two-stage modification remains unchanged.  It is shown  that, provided
the values $\delta\jp1$ are uniformly bounded above, the resulting
diagonal modifications retain a corresponding uniform bound.

At the $j$th stage of Nash's method, a modified \LDLT{} factorization
of the $2\times 2$ matrix
\begin{equation} \label{eqUC:Bj-defined}
 B_j =  \pmat{ \dbar_j   & \beta\jp1  \\
               \beta\jp1 & \gamma\jp1   }
\end{equation}
is computed, and the unmodified Schur complement $\phi\jp1(0,0) = \gamma\jp1 -
\beta\jp1^2/\dbar\jd$ is compared to the fixed detection threshold
$\delta$. If $\phi\jp1(0,0) \ge \delta$, then the current curvature is
considered sufficient and no modification is made.  In this case, no target
curvature is required. Conversely, when $\phi\jp1(0,0) < \delta$, an
iteration-dependent target curvature $\delta\jp1 \ge \delta$ is selected
and used as the lower bound for the Schur complement in the modification
subproblem.

If a modification is necessary,  the subproblem analogous to
(\ref{eqUC:Nash-subproblem}) is solved, where $\delta$ is replaced by
$\delta\jdp1$, i.e.,
\begin{equation} \label{eqUC:Nash-subproblem-with-dj}
\begin{aligned}
\minimize{\sigma_j,\sigmabar\jp1}
           &\bgap \sigma_j +  \sigmabar\jp1                                   \\
\subject   &\bgap \sigma_j \ge 0, \bgap \sigmabar\jp1  \ge 0, \bgap
                      \phi\jp1(\sigma_j,\sigmabar\jp1) \ge \delta\jdp1
\end{aligned}
\end{equation}
where $\phi\jp1(\sigma\jd,\sigmabar\jdp1) = \gamma\jdp1 + \sigmabar\jdp1 -
\beta\jp1^2/(\dbar\jd + \sigma\jd)$. Therefore the fixed detection
threshold $\delta$ is used only to determine whether the current curvature
is sufficient, whereas the target curvature $\delta\jp1$ specifies the
curvature added after the need for a modification has already been
established.

Nash's approach begins by testing the minimizer with respect to the
nonlinear constraint for feasibility. If this point is infeasible, the
solution is selected from the two remaining candidates by choosing the one
that minimizes the objective, noting that at least one candidate is always
feasible. Here a slightly different approach is used, where first-order
information from the nonlinear constraint is utilized and a greedy
selection strategy using its partial derivatives is employed. An advantage
of this derivation is that the inequalities needed to establish the bounds
on the modifications are immediately apparent.  The method is summarized
below (for a thorough discussion of greedy selection methods see Bretthauer
and Shetty~\cite{BreS02}).  As the objective function is linear in both
variables with equal coefficients, the solution is dependent on the partial
derivatives
\[
   \nabla_{\sigma_j}\phi\jp1(\sigma_j, \sigmabar\jp1)
          = \frac{\beta\jp1^2}{(\dbar_j + \sigma_j)^2}, \wordss{and}
   \nabla_{\sigmabar\jp1}\phi\jp1(\sigma_j,\sigmabar\jp1)
          = 1.
\]
The partial derivatives are equal for the values
\[
 \sigma_j = \beta\jp1 - \dbar_j, \qquad \sigmabar\jp1 = \delta\jp1 - \gamma\jp1 + \beta\jp1,
\]
which is the required solution of (\ref{eqUC:Nash-subproblem-with-dj}) if
the nonnegativity constraints are satisfied.  If $\beta\jp1 - \dbar_j <
0$, then for any feasible $\sigma_j \ge 0$ it holds that
$\nabla_{\sigma_j}\phi\jp1(\sigma_j, \sigmabar\jp1) < 1$ and the solution is
given by
\[
  \sigma\jd = 0, \qquad \sigmabar\jdp1 = \delta\jdp1 - \gamma\jdp1 + \beta\jp1^2/\dbar\jd.
\]
Otherwise the nonnegativity constraint is violated by $\delta\jp1 -
\gamma\jp1 + \beta\jp1 < 0$, in which case for any feasible
$\sigmabar\jp1$, it holds that $\nabla_{\sigma_j}\phi\jp1(\sigma_j,
\sigmabar\jp1) > 1$ and the solution is given by
\[
  \sigma_j = \beta\jp1^2/(\gamma\jp1 - \delta\jp1) - \dbar_j, \qquad \sigmabar\jp1 = 0.
\]
The resulting procedure for finding modifications based on the bound
$\delta\jp1$ is described in Algorithm~\ref{algUC:New-Nash-modification}.

\begin{algorithm}[ht]
  \caption{\label{algUC:New-Nash-modification}
\hspace{-3pt}.\mgap  The tridiagonal modification.\algstrut}

\begin{algorithmic}[0]
       \Procedure{[$\dbar\jdp1, \sigma_j\superstar, \sigmabar\jp1\superstar] =\  $\algname{modT}}{$\dbar_j$, $\gamma\jp1$, $\beta\jp1$\Astrutu}
\State   Choose $\delta > 0$;
\State   $\phi = \gamma\jdp1 - \beta\jp1^2/\dbar\jd$;     \Comment{$\phi = \phi\jdp1(0,0)$}
         \If{\ent{\phi \ge \delta}}
\State      $\sigma_j\superstar = 0$;                      \agap
               $\sigmabar\superstar\jp1 = 0$;              \agap
               $\dbar\jdp1 = \phi$;
         \Else
\State      Choose the target curvature $\delta\jp1 \ge \delta$; \Comment{See Algorithm~\ref{algUC:Nash-minD}}
\State      $\dbar\jp1  = \delta\jp1$;
\State      Set $\sigma = \abs{\beta\jp1} - \dbar_j$;      \agap
                $\sigmabar = \delta\jp1 + \abs{\beta\jp1} - \gamma\jp1$;
            \If{\ent{\sigma \ge 0 \AND \sigmabar \ge 0}}
\State         $\sigma_j\superstar = \sigma$;              \agap
                  $\sigmabar\jp1\superstar = \sigmabar$;   \newstrut
            \ElsIf{\ent{\sigmabar < 0}}
\State          $\sigma_j\superstar = \dbar\jd\big(\delta\jp1 - \phi\big)/\big(\gamma\jdp1 - \delta\jp1\big)$; \agap
                   $\sigmabar\jp1\superstar = 0$;          \newstrut
            \Else
\State          $\sigma_j\superstar = 0$;                  \agap
                   $\sigmabar\jp1\superstar = \delta\jp1 - \phi$;
            \EndIf
         \EndIf
       \EndProcedure
  \end{algorithmic}
\end{algorithm}

A modified Lanczos method based on the two-stage modification with distinct
bounds on the curvature detection threshold $\delta$ and the
iteration-dependent target curvature $\delta\jp1$ is given in
Algorithm~\ref{algUC:Lanczos-modCG}. When a modification is necessary, the
value $\delta\jp1$ may be chosen freely, provided that it is bounded below
by $\delta$. To keep the modifications bounded, a safeguard $\delta\submax$
is imposed so that the target curvature cannot grow arbitrarily large. This
implies that the target curvature satisfies the bounds $\delta \le
\delta\jp1 \le \delta\submax$.

\begin{algorithm}[ht]
  \caption{\label{algUC:Lanczos-modCG}
\hspace{-3pt}.\mgap Lanczos-based modified conjugate-gradient method.\algstrut}

\begin{algorithmic}[0]
      \Procedure{[$p] =\  $\algname{lanczos\_mod\_CG}}{$H$, $g$, $\tau\subtol$, $j\submax$\Astrutu}
\State     \Restrictions{$g \ne 0$, \sgap $\tau\subtol > 0$, \sgap $j\submax > 0$;}
\State      $p_0 = 0$;                        \agap
               $v_0 = 0$;
\State      $\beta\dr1 = \norm{g}$;           \agap
              $v\dr1 = -g/\beta\dr1$;         \agap
              $\gamma\dr1 = v_1\T H v\dr1$;   \agap
              $q\dr0 = v\dr1$;
\State        Choose $\delta > 0$;
\State        $\sigmabar_1 =  \max\Set{\gamma_1,\delta} - \gamma_1$; \agap
              $\dbar_1 = \gamma_1 + \sigmabar_1$;
\State      $j = 1$;
           \While{\ent{j < j\submax}}
\State        $s\jp1 = H v_j - \gamma_j v_j - \beta_j v\jm1$; \agap
                    $\beta\jp1 = -\norm{s\jp1}$;
              \If{\ent{\mod{\beta\jp1} > \tau\subtol}}
\State           $v\jdp1 = s\jdp1/\beta\jdp1$;   \agap
                    $\gamma\jdp1 = v\jp1\T H v\jdp1$;
\State           $[\dbar\jp1,\sigma_j,\sigmabar\jp1] = \algname{modT}(\dbar_j,\gamma\jp1,\beta\jp1)$;
                     \Comment{The tridiagonal modification}
\State           $d_j  =  \dbar_j + \sigma_j$;  \agap
                    $l\jp1 = \beta\jp1/d_j$;
\State           $q_j   = v\jp1 - l\jp1 q\jm1$;
              \Else
\State           $d_j          =  \dbar_j$;
              \EndIf
\State        \IIf{\ent{j = 1}} \ent{\cgstep_0 = \beta_1/d_1}  \IElse{\ent{\cgstep\jm1 = -  \cgstep\jm2\beta_j/d_j}};
\State        $p_j = p\jm1 + \cgstep\jm1 q\jm1$;   \Comment{The gradient norm at $p_j$ is $\mod{\beta\jp1}\cgstep\jm1$\,}
\State        \IF{} \ent{\mod{\beta\jp1} \le \tau\subtol} \ent{\BREAK};
\State        $j \gets j+1$;
           \EndWhile
\State     $p = p_j$;
       \EndProcedure
  \end{algorithmic}
\end{algorithm}

The iterates generated by Algorithm~\ref{algUC:Lanczos-modCG} depend on the
entries of the modified tridiagonal matrix. At iteration $j$, the two-stage
modification changes the diagonal entries $\dbar_j$ and $\dbar\jp1$,
producing $d_j=\dbar_j+\sigma_j$ and a new value of $\dbar\jp1$. The search
directions $\Set{ q_0$, $q_1$, \dots, $q\jm1 }$ and the step lengths
$\cgstep_0$, $\cgstep_1$, \dots, $\cgstep_{j-2}$ have already been
determined and are not affected by this modification. The modified value
$d_j$ determines the current step length $\cgstep\jm1$, whereas $\dbar\jp1$
does not occur in the update
\[
   p_j = p\jm1 + \cgstep\jm1 q\jm1.
\]
Suppose that the Lanczos process terminates at iteration $m$, i.e.,
$\beta_{m+1} < \tau\subtol$.  On completion of iteration $j$ ($j <m$) the
modified Hessian is given by
\begin{equation} \label{eqUC:Lanczos-modified-Hessian}
 \Htilde\jd
     = H + \sum_{i=1}^j (\sigmabar\id + \sigma\id) v\id v_i\T
                   + \sigmabar\jdp1 v\jdp1 v\jp1\T.
\end{equation}
Then, in exact arithmetic, the iterates $\Set{ p_1$, $p_2$, \dots, $p_j}$
generated by Algorithm~\ref{algUC:Lanczos-modCG} coincide with the first
$j$ iterates generated by Algorithm~\ref{algUC:Lanczos-CG} applied to
$\Htilde\jd$. Thus, through iteration $j$, the modified Lanczos-based
method may be interpreted as applying the conventional Lanczos-based method
of conjugate gradients to the current modified Hessian $\Htilde\jd$.

The modifications generated by Nash's original method are bounded
(Nash~\cite[Theorem 2]{Nas84}). The next step is to extend this result to
iteration-dependent curvature targets. In the implementation, the target
curvatures are safeguarded by the prescribed upper bound $\delta\submax$,
so that
\[
   0 < \delta \le \delta\jp1 \le \delta\submax
\]
at every stage at which a modification is required. The lower bound ensures
that the modified pivots remain sufficiently positive, while the upper
bound makes it possible to control the modifications uniformly over the
iterations. No additional assumption is made on how the values $\delta\jp1$
are selected. Under these conditions, the modifications generated by
Algorithm~\ref{algUC:New-Nash-modification} satisfy the following bound.

\begin{proposition}
Let $T \in \Re^{m \times m}$ denote the symmetric tridiagonal matrix
generated by the Lanczos process applied to a symmetric matrix $H$. Assume
that $T+ E = L D L\T$ is a modified Cholesky factorization with
modification computed using
{\rm Algorithm~\ref{algUC:New-Nash-modification}}.
In addition, assume that $\delta \le \delta\jp1 \le \delta\submax$ for all
$0 \le j < m$. Then
\[
\begin{aligned}
        \sigmabar\jp1 &\le \delta\jp1 + \abs{\gamma\jp1} + \mod{\beta\jp1}, &\qquad j &= 0, 1, \dots, m-1, \\
        \sigma_j      &\le \mod{\beta\jdp1},                                &\qquad j &= 1, 2, \dots, m-1,
\end{aligned}
\]
so that
\[
     \norm{E} \le \delta\submax + \gamma + 2\beta,   \wordss{where}
    \gamma= \max_{1 \le j \le m}\Set{\mod{\gamma_j} } \wordss{and} \beta = \max_{1 \le j \le m}\Set{\mod{\beta_j}}.
\]
\end{proposition}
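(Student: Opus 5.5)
The plan is a case analysis on the branches of Algorithm~\ref{algUC:New-Nash-modification}, followed by reading off the diagonal structure of $E$.

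First, the positivity facts I will use throughout. Every intermediate pivot satisfies $\dbar_j \ge \delta > 0$. For $j=1$ this holds by the choice of $\sigmabar_1$. For later $j$, either $\dbar\jp1 = \phi \ge \delta$ when no modification is made, or $\dbar\jp1 = \delta\jp1 \ge \delta$ when one is. Consequently $d_j = \dbar_j + \sigma_j \ge \delta$, and all divisions in \algname{modT} are well defined.

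For $\sigmabar_1$ the bound is direct. If $\gamma_1 \ge \delta$ then $\sigmabar_1 = 0$; otherwise $\sigmabar_1 = \delta - \gamma_1 \le \delta_1 + \abs{\gamma_1}$, interpreting the initial target as $\delta$ (the case $j=0$, with $\beta_1$ contributing nothing extra).

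For $j \ge 1$, when no modification occurs both quantities are $0$. Otherwise there are three branches.

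In branch (i), $\sigma = \abs{\beta\jp1} - \dbar_j \ge 0$ and $\sigmabar \ge 0$. Then $\sigma_j \le \abs{\beta\jp1}$ because $\dbar_j > 0$, and $\sigmabar\jp1 = \delta\jp1 + \abs{\beta\jp1} - \gamma\jp1 \le \delta\jp1 + \abs{\gamma\jp1} + \abs{\beta\jp1}$.

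In branch (ii), $\sigmabar < 0$, i.e.\ $\gamma\jp1 - \delta\jp1 > \abs{\beta\jp1} > 0$, and $\sigmabar\jp1 = 0$. Here
\[
  \sigma_j = \beta\jp1^2/(\gamma\jp1-\delta\jp1) - \dbar_j < \beta\jp1^2/\abs{\beta\jp1} = \abs{\beta\jp1}.
\]
I also need $\sigma_j \ge 0$, which follows from $\phi < \delta \le \delta\jp1$ and the formula $\dbar_j(\delta\jp1-\phi)/(\gamma\jp1-\delta\jp1)$.

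In branch (iii), $\sigma < 0$ and $\sigmabar \ge 0$. Then $\sigma_j = 0$ and $\sigmabar\jp1 = \delta\jp1 - \gamma\jp1 + \beta\jp1^2/\dbar_j$. Since $\dbar_j > \abs{\beta\jp1}$, we get $\beta\jp1^2/\dbar_j < \abs{\beta\jp1}$, which gives the claimed bound.

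For the bound on $\norm{E}$, I will note that $E = \diag(e_1,\dots,e_m)$ with $e_i = \sigmabar_i + \sigma_i$ ($\sigma_m = 0$) and all entries nonnegative. Hence $\norm{E} = \max_i (\sigmabar_i + \sigma_i)$. Adding the two bounds at index $i$ (using $\sigmabar_i$ from stage $i-1$ and $\sigma_i$ from stage $i$) gives
\[
  \sigmabar_i + \sigma_i \le \delta_i + \abs{\gamma_i} + \abs{\beta_i} + \abs{\beta\ip1} \le \delta\submax + \gamma + 2\beta,
\]
where $\beta$ is understood to also cover $\beta\ip1$ up to index $m$; for $i = m$ the $\sigma_m$ term is absent. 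I expect the only subtle point to be confirming that $\dbar_j \ge \delta > 0$ at every stage and that the branch conditions supply exactly the strict inequalities used in (ii) and (iii). The rest is bookkeeping, including the index convention for $\delta_1$ and $\beta_1$ at $j=0$.
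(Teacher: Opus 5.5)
Your proof is correct and follows the paper's argument: a case analysis over the three candidate solutions of the modification subproblem, with your branches (i), (ii), (iii) corresponding to the paper's (\ref{eqUC:Ebounda}), (\ref{eqUC:Eboundc}), (\ref{eqUC:Eboundb}). In each case you use positivity of $\dbar_j$ and the branch conditions to obtain $\sigma_j \le \mod{\beta\jdp1}$ and $\sigmabar\jdp1 \le \delta\jdp1 + \abs{\gamma\jdp1} + \mod{\beta\jdp1}$. You also make explicit several points the paper leaves implicit: $\dbar_j \ge \delta$ at every stage, $\sigma_j \ge 0$ in branch (ii), and the derivation of the bound on $\norm{E}$ from the fact that $E$ is diagonal with nonnegative entries $\sigmabar_i + \sigma_i$.
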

\begin{proof}
For the initial modification, if $\gamma_1 \ge \delta$ then $\sigmabar_1=
0$. Otherwise $\sigmabar_1 = \delta_1 - \gamma_1$, giving the bound
$\sigmabar_1 \le \delta_1 + \mod{\gamma_1}$.

For $j \ge 1$, if $\gamma\jdp1 - \beta\jp1^2/\dbar\jd \ge \delta$, then
$\sigma_j = \sigmabar\jp1 = 0$, and the bounds on $\sigma_j$ and
$\sigmabar\jp1$ hold trivially.  Otherwise the solution
$(\sigma_j,\sigmabar\jdp1)$ is one of the following
\begin{subequations}
\begin{align}
     \sigma_j &= \mod{\beta\jdp1} - \dbar_j, &\qquad \sigmabar\jdp1 &= \delta\jdp1 - \gamma\jdp1 + \mod{\beta\jdp1}
            \label{eqUC:Ebounda}        \\
     \sigma_j &= 0,                          &\qquad \sigmabar\jdp1 &= \delta\jdp1 - \gamma\jdp1 + \beta\jp1^2/\dbar_j     \label{eqUC:Eboundb}        \\
     \sigma_j &= \beta\jp1^2/(\gamma\jdp1 - \delta\jdp1) - \dbar_j, &\qquad \sigmabar\jdp1 &= 0.
            \label{eqUC:Eboundc}
\end{align}
\end{subequations}
If the $(\sigma_j,\sigmabar\jdp1)$ of (\ref{eqUC:Ebounda}) is feasible, then
the nonnegativity of $\dbar_j$ implies that
\[
   \sigma_j      \le \mod{\beta\jdp1}                        \wordss{and}
   \sigmabar\jp1 \le \delta\jp1 + \mod{\gamma\jp1} + \mod{\beta\jdp1}.
\]
Alternatively, if $(\sigma_j,\sigmabar\jdp1)$ of (\ref{eqUC:Ebounda}) is
infeasible, then either $\mod{\beta\jdp1} \le \dbar_j$ or $\mod{\beta\jdp1} \le
\gamma\jp1 - \delta\jp1$.  If $\mod{\beta\jdp1} \le \dbar_j$, then the
solution is given by (\ref{eqUC:Eboundb}), with
\[
 \sigmabar\jdp1   = \delta\jdp1 - \gamma\jdp1 + \beta\jp1^2/\dbar\jd
                \le \delta\jdp1 - \gamma\jdp1 + \mod{\beta\jdp1}.
\]
As $\gamma\jdp1 - \beta\jp1^2/\dbar\jd < \delta < \delta\jp1$ and
$\sigmabar\jdp1$ is nonnegative, the required bound on $\sigmabar\jdp1$
follows directly.

If $\mod{\beta\jdp1} \le \gamma\jp1 - \delta\jp1$, then the solution is given
by (\ref{eqUC:Eboundc}), with
\[
     \sigma\jd = \beta\jp1^2/(\gamma\jdp1 - \delta\jdp1) - \dbar\jd \le \mod{\beta\jdp1} - \dbar\jd,
\]
in which case $\sigma\jd\le \mod{\beta\jdp1}$. It follows that $\norm{E}
\le \delta\submax + \gamma + 2\beta$, where $\gamma= \max_{1 \le j \le m}
\Set{\abs{\gamma_j}}$ and $\beta = \max_{1 \le j \le m}\Set{ \mod{\beta_j} }$.
\end{proof}

\subsection{Iteration-dependent target curvature}
The fixed value $\delta$ is used to determine whether the current Schur
complement is sufficiently positive. If $\gamma\jp1 - \beta\jp1^2/\dbar\jd
\ge \delta$ then no modification is necessary. When the detection test
fails, an iteration-dependent target curvature is computed. In order to
allow the imposed curvature to reflect the local properties of the matrix
being modified, the target curvatures are based on information available
from the current matrix being modified. This allows the target curvature to
vary throughout the factorization.

Two choices of $\delta\jdp1$ are proposed. The first candidate, denoted by
\textbf{Schur}, is the magnitude of the unmodified Schur complement
$\mod{\gamma\jdp1 - \beta\jp1^2/\dbar\jd}$. The second, denoted by
\textbf{Eigen}, is the minimum absolute eigenvalue of $B_j$
(\cf~(\ref{eqUC:Bj-defined})). In both cases, the choice of $\delta\jdp1$ is
safeguarded below by $\delta$ and above by $\delta\submax$ before it is
used in Algorithm~\ref{algUC:New-Nash-modification}. The corresponding
definitions are summarized in Algorithm~\ref{algUC:Nash-minD}.

\begin{algorithm}[H] 
  \caption{\label{algUC:Nash-minD}
\hspace{-3pt}.\mgap Two alternative choices of $\delta\jp1$.\algstrut}

\begin{algorithmic}[0]
       \Procedure{[$\delta\jp1] =\  $\algname{posCurve}}{$\dbar_j$, $\gamma\jp1$, $\beta\jp1$, $\delta$, $\delta\submax$\Astrutu}
                                                        \Comment{\textbf{Schur}}
\State   $\deltahat\jp1 = \mod{\gamma\jdp1 - \beta\jp1^2/\dbar\jd}$; \agap $\delta\jp1 = \min\Set{\max{\Set{\deltahat\jdp1,\delta}},\delta\submax}$;
       \EndProcedure

\State
       \Procedure{[$\delta\jp1] =\  $\algname{posCurve}}{$\dbar_j$, $\gamma\jp1$, $\beta\jp1$, $\delta$, $\delta\submax$\Astrutu}
                                                        \Comment{\bf{Eigen}}
\State   $\tau      = (\gamma\jp1 - \dbar_j)/(2\beta\jp1)$;
\State   $t         = \sign(\tau)/\big(\mod{\tau} + \sqrt{1 + \tau^2}\big)$;\algstrut
\State   $\lambda_1 = \dbar_j    - t\beta\jp1$;         \agap\algstrut
            $\lambda_2 = \gamma\jp1 + t \beta\jp1$;
\State   $\deltahat\jp1 = \min\Set{\mod{\lambda_1},\mod{\lambda_2}}$; \agap $\delta\jp1 = \min\Set{\max{\Set{\deltahat\jdp1,\delta}},\delta\submax}$;
       \EndProcedure
  \end{algorithmic}
\end{algorithm}

When the test in Algorithm~\ref{algUC:New-Nash-modification} detects
insufficient curvature, Algorithm~\ref{algUC:Nash-minD} can be used to
determine the curvature target $\delta\jdp1$ used in the modification
subproblem.

The developments of this section have been presented as extensions of
Nash's modified Lanczos method because the curvature information required
by the two-stage modification arises naturally from the tridiagonal matrix
generated by the Lanczos process.  Nevertheless, the Lanczos and
conjugate-gradient methods are closely related and generate the same
sequence of Krylov subspaces in exact arithmetic. This observation suggests
that the proposed modifications need not be restricted to a Lanczos-based
implementation. In the next section, it is shown that the quantities required
by the two-stage modification may be recovered directly from the
gradient-based method of conjugate gradients. The resulting formulation
provides a gradient-based implementation that is equivalent to the modified
Lanczos method of Algorithm~\ref{algUC:Lanczos-modCG}.

\section{A Modified Method of Conjugate Gradients}\label{secUC:cg}
The modifications introduced in Section~\ref{secUC:lanczos} are formulated
using the tridiagonal matrices generated by the Lanczos process as an
extension of the original method proposed by Nash. Nevertheless, the
Lanczos and gradient-based methods of conjugate gradients are closely
related and, in exact arithmetic, generate the same sequence of expanding
Krylov subspaces. This suggests that the quantities required by the
two-stage modification may also be recovered directly from the
gradient-based CG.

A direct gradient-based implementation is not immediate. In the Lanczos
formulation, the next Lanczos vector may be generated independently of
finding the next iterate. Consequently, the curvature information required
to complete the two-stage modification is available before the step length
and the next iterate are computed. In the gradient-based CG formulation,
however, the next gradient needed to complete the modification
is available only after a step has been taken. Moreover, the second
modification in the direction of the current gradient changes the curvature
along the current search direction and hence changes the step length used
to generate the next gradient. This gives an apparent circular dependence:
the completed modification is required to determine the step length,
whereas the next gradient required to compute that modification is
available only after the step has been taken.

In this section it is shown that this dependence may be resolved by first
computing a provisional gradient. A rank-one modification in the direction
of the current gradient changes the scale, but not the direction, of the
gradient obtained following an exact line search. This ensures that the
provisional gradient provides the direction of the next gradient and may be
used to recover the curvature information needed to complete the
modification.  It is then shown that the resulting gradient-based
formulation generates the same modified Hessians as the Lanczos formulation
in exact arithmetic. Finally, an implementation is derived that requires
only one additional vector and one additional scalar relative to the
conventional gradient-based method of conjugate gradients.

Throughout this section, $p_j$, $r_j$, and $q_j$ denote the current
iterate, gradient, and search direction, respectively, with the initial
iterate $p_0 = 0$.

\subsection{Recovery of curvature information}
In exact arithmetic, the gradients generated by CG are mutually
orthogonal. The normalized gradients therefore define the orthonormal
matrix
\[
   R\jp1 = \pmat{ \dfrac{r_0}{\|r_0\|} & \dfrac{r_1}{\|r_1\|} & \cdots & \dfrac{r_j}{\|r_j\|}},
\]
such that the matrix $R\jp1\T H R\jdp1$ is tridiagonal. This follows from
the mutual orthogonality of the gradients, which successively form a basis
for the expanding Krylov subspaces. This implies that the diagonal and
off-diagonal entries of $R\jp1\T H R\jdp1$ may be recovered directly from
the CG gradients. In particular,
\[
 \gamma\jdp1 = \frac{r_j\T H r_j}{\norm{r_j}^2}, \wordss{and}
 \beta\jdp1  = \frac{r_j\T H r\jdm1}{\norm{r_j}\norm{r\jdm1}} \words{for} j\ge 1.
\]

It follows that the diagonal modifications generated by the two-stage
procedure may be expressed in terms of the normalized CG gradients. After
the first modification associated with $r_j$ has been introduced, the
provisional modified Hessian has the form
\[
  \Htilde\jd = H + \sum_{i=1}^j \frac{\sigmabar\id+\sigma\id} {r_{i-1}\T r\idm1} r\idm1 r_{i-1}\T + \frac{\sigmabar\jp1}{r_j\T r\jd} r\jd r_j\T.
\]

The next stage of the modification will add a rank-one update in the
direction of $r\jd$ with curvature $\sigma\jp1$, and hence changes the
curvature along $q_j$ and the step length taken in that direction. In order
to determine $\sigma\jp1$, the two-stage procedure also requires curvature
information associated with the next normalized gradient
direction. However, this direction would ordinarily be available only
\emph{after} the completed step has been taken. The following result shows
that it is sufficient to compute a provisional gradient using $\Htilde\jd$.

\begin{proposition}\label{prop:parallel-gradients}
Assume that $p \in \Re^n$ such that $r = g + H p$ is orthogonal to $p$, and
consider search direction $q$ such that $q\T r = -r\T r$. Suppose that
curvature is added to $H$ via the rank-one update
\[
 \Htilde = H + \frac{\sigma}{r\T r}r r\T,
\]
where both $q\T H q$ and $q\T \Htilde q$ are nonzero. Then the gradient
$r\subplus$ found using an exact line search with $H$ is parallel to the
gradient $\rtilde\subplus$ found using $\Htilde$. In particular,
\[
 \rtilde\subplus = \frac{q\T H q}{q\T H q + \sigma r\T r}r\subplus.
\]
\end{proposition}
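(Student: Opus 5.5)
The proof is a direct computation of the two exact line searches along $q$ from the same point $p$, followed by a comparison of the resulting gradients.

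\textbf{Step 1 (gradient of the modified model at $p$).} With $H$ the gradient is $r = g + Hp$. With $\Htilde$ it is $g + \Htilde p = r + \frac{\sigma}{r\T r} r (r\T p)$. Since $r\T p = 0$ by hypothesis, this equals $r$, so both models have the same gradient at $p$.

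\textbf{Step 2 (step lengths).} Using $q\T r = -r\T r$, the exact line search with $H$ gives
\[
\cgstep = \frac{r\T r}{q\T H q}.
\]
For $\Htilde$ the curvature along $q$ is
\[
q\T\Htilde q = q\T H q + \sigma (q\T r)^2/(r\T r) = q\T H q + \sigma\, r\T r,
\]
so
\[
\widetilde\cgstep = \frac{r\T r}{q\T H q + \sigma r\T r}.
\]
Both quantities are well defined because the curvatures are assumed nonzero.

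\textbf{Step 3 (new gradients).} With $H$, $r\subplus = r + \cgstep Hq$. With $\Htilde$,
\[
\rtilde\subplus = r + \widetilde\cgstep\,\Htilde q = r + \widetilde\cgstep H q + \widetilde\cgstep\,\frac{\sigma (r\T q)}{r\T r}\, r = (1-\sigma\widetilde\cgstep)\, r + \widetilde\cgstep\, Hq .
\]

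\textbf{Step 4 (comparison).} From Step 2,
\[
1-\sigma\widetilde\cgstep = \frac{q\T H q}{q\T H q+\sigma r\T r} \defined c ,
\]
and also $\widetilde\cgstep = c\,\cgstep$. Hence $\rtilde\subplus = c\,(r + \cgstep Hq) = c\, r\subplus$, which is the claimed identity and, in particular, the parallelism.

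\textbf{Main difficulty.} The only potentially delicate point is Step 1, that is, noticing that the orthogonality $r\T p = 0$ is exactly what keeps the starting gradient unchanged under the rank-one update. Once that is observed, the algebra in Steps 2 to 4 is routine.
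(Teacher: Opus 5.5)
Your proof is correct and takes the same approach as the paper: both arguments compute $\rtilde\subplus = r + \frac{r\T r}{q\T Hq + \sigma r\T r}(Hq - \sigma r)$ and $r\subplus = r + \frac{r\T r}{q\T Hq}Hq$ directly and match coefficients. The paper expands $c\,r\subplus$ until it equals $\rtilde\subplus$, whereas you rewrite $\rtilde\subplus$ as $c\,r\subplus$. Your Step 1 also makes explicit something the paper's proof uses only implicitly: because $r\T p = 0$, the modified quadratic has the same gradient $r$ at $p$.
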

\begin{proof}
Using $\Htilde$ to calculate the step size in the exact line search yields the gradient
\[
   \rtilde\subplus = r + \left(\frac{r\T r}{q\T Hq + \sigma r\T r}\right)(Hq - \sigma r)
\]
whereas using $H$ yields
\[
   r\subplus = r + \frac{r\T r}{q\T Hq}Hq.
\]
Therefore
\begin{align*}
   \frac{q\T H q}{q\T H q + \sigma r\T r}r\subplus & = \frac{q\T H q}{q\T H q + \sigma r\T r}\left(r + \frac{r\T r}{q\T Hq}Hq\right) \\
   & = \left(\frac{q\T H q + \sigma r\T r - \sigma r\T r}{q\T H q + \sigma r\T r}\right)r + \left(\frac{r\T r}{q\T H q + \sigma r\T r}\right)Hq \\
   & = r + \left(\frac{r\T r}{q\T Hq + \sigma r\T r}\right)(Hq - \sigma r) = \rtilde\subplus.
\end{align*}
It follows that $\rtilde\subplus = \dfrac{q\T H q}{q\T H q + \sigma r\T
  r}r\subplus$, as required.
\end{proof}

\noindent
Proposition~\ref{prop:parallel-gradients} resolves the apparent circular
dependence. An exact line search generates a new gradient orthogonal to the
previous search space, which implies that the action of $\Htilde\jdp1$ on
the iterate $p\jd$ is identical to that of
\[
  \Htilde\jd + \frac{\sigma\jdp1} {r_j\T r\jd} r\jd r_j\T.
\]
Therefore a provisional step $\cgstepbar\jd$ may be computed first using
$\Htilde\jd$, yielding
\[
   \rbar\jdp1 = r\jd + \cgstepbar\jd\Htilde\jd q\jd.
\]
The role of $\cgstepbar\jd$ is to generate a vector parallel to the
gradient that would be obtained using the subsequent modified Hessian
$\widetilde H\jp1$. Consequently, the normalized provisional gradient may
be used to compute the quantities
\[
  \gamma\jp2 = \frac{\rbar\jp1\T H\rbar\jdp1}{\norm{\rbar\jdp1}^2}, \qquad
   \beta\jp2 = \frac{\rbar\jp1\T H r_j} {\norm{\rbar\jdp1}\norm{r_j}}.
\]
These quantities determine $\sigma\jp1$ and $\sigmabar\jp2$ as in
Algorithm~\ref{algUC:New-Nash-modification}.

\subsection{Calculating the modified gradient information}
A direct implementation of the gradient-based formulation would require all
previous gradients to be stored because the gradient, search direction and
step length are all calculated using the action of the modified Hessian on
the current search direction. However, the action of the accumulated
modifications on the current search direction may be represented instead
using one additional vector and one additional scalar.  To show this, let
$q_0$, $q_1$, \dots, $q_j$ denote the search directions generated by the CG
algorithm. It holds that
\[
 q_j\T r\id = -r_j\T r\jd \wordss{for} i \le j.
\]
It is straightforward to establish that for any rank-one modification in the
direction of $r\id$ for $i \le j$, the action on $q\jd$ may be recovered as
\[
 \frac{\sigma}{\norm{r\id}^2}r\id r_i\T q\jd
    = -\norm{r\jd}^2\frac{\sigma}{\norm{r\id}^2}r_i.
\]
It follows that the information from the modifications may be stored in a vector
$\psi_j$ and scalar $\chi_j$ such that $\psi_0 = \mathbf{0}, \chi_0 = 0$
and  may be updated iteratively as
\begin{alignat*}{2}
    \psibar\jp1 &= \psi\jd + \sigmabar\jp1 r\jd/\norm{r\jd}^2, & \qquad \psi\jp1 &= \psibar\jp1 + \sigma\jp1 r\jd/\norm{r\jd}^2, \\
    \chibar\jp1 &= \chi\jd + \sigmabar\jp1/\norm{r\jd}^2,      & \qquad \chi\jp1 &= \chibar\jp1 + \sigma\jp1/\norm{r\jd}^2.
\end{alignat*}
This implies that the search directions and step lengths generated for the
modified Hessians $\Htilde\jd$ and $\Htilde\jdp1$ may be calculated using
the expressions
\begin{alignat*}{2}
  q_j\T \Htilde\jd q\jd &= q_j\T H q\jd + (r_j\T r\jd)^2\chibar\jdp1, &\qquad q_j\T \Htilde\jdp1 q\jd &= q_j\T H q\jd + (r_j\T r\jd)^2\chi\jdp1,\\[3pt]
\noalign{\leftline{while the gradients may be generated iteratively using the Hessian-vector product}\vskip 3pt}
        \Htilde\jd q\jd &= H q\jd - \norm{r\jd}^2\psibar\jdp1,        &\qquad       \Htilde\jdp1 q\jd &= H q\jd - \norm{r\jd}^2\psi\jdp1.
\end{alignat*}
It follows that the modified Hessian need not be formed explicitly.
Relative to the conventional gradient-based method of conjugate gradients,
generating the necessary information from modification requires one
additional vector, $\psi_j$, and one additional scalar, $\chi_j$. These
identities give a gradient-based implementation of the two-stage
modification. At each iteration, the first-stage modification is
incorporated into the provisional quantities $\psibar\jdp1$ and
$\chibar\jdp1$. A provisional gradient is then generated and used to
compute the additional curvature information required by
Algorithm~\ref{algUC:New-Nash-modification}. Once the second-stage
modification has been determined the step length, gradient, and search
direction are updated using the completed modified Hessian.  The resulting
procedure is given in Algorithm~\ref{algUC:Gradient-modCG}.

\begin{algorithm}[ht] 
  \caption{\label{algUC:Gradient-modCG}
\hspace{-3pt}.\mgap A gradient-based modified conjugate-gradient method.\algstrut}

\begin{algorithmic}[0]
      \Procedure{[$p] =\  $\algname{grad\_mod\_CG}}{$H$, $g$, $\tau\subtol$, $j\submax$\Astrutu}
\State     \Restrictions{$g \ne 0$, \sgap $\tau\subtol > 0$, \sgap $j\submax > 0$;}
\State      $p_0 = 0$;                        \agap
            $r_0 = g$;                        \agap
            $\chi\dr0 = 0$;                   \agap
            $\psi\dr0 = \mathbf{0}$;          \agap
\State        $\gamma\dr1 = r_0\T H r\dr0/\norm{r\dr0}^2$;   \agap
              $q\dr0 = -r\dr0$;
\State        Choose $\delta > 0$;
\State        $\sigmabar_1 =  \max\Set{\gamma_1,\delta} - \gamma_1$; \agap
              $\dbar_1 = \gamma_1 + \sigmabar_1$;
\State      $j = 1$;
            \While{\ent{j \le j\submax}}
\State        $\chibar\jd = \chi\jdm1 + \sigmabar\jd/\norm{r\jdm1}^2$; \agap
              $\psibar\jd = \psi\jdm1 + \sigmabar\jd r\jdm1/\norm{r\jdm1}^2$;
\State        $\cgstepbar\jdm1 = -r\jm1\T q\jdm1/\big(q\jm1\T Hq\jdm1 + \chibar\jd \norm{r\jdm1}^4\big)$;
\State    $\rbar\jd = r\jdm1 + \cgstepbar\jdm1 (Hq\jdm1 - \norm{r\jdm1}^2\psibar\jd\big)$
              \If{\ent{\norm{\rbar\jd} > \tau\subtol}}
\State           $\gamma\jdp1 = \rbar_j\T H \rbar\jd/\norm{\rbar\jd}^2$;     \agap
                    $\beta\jdp1 = \rbar_j\T H r\jdm1/\big(\norm{\rbar\jd}\norm{r\jdm1}\big)$;
\State           $[\dbar\jp1,\sigma\jd,\sigmabar\jp1] = \algname{modT}\big(\dbar\jd,\gamma\jp1,\beta\jp1\big)$;
                     \Comment{The tridiagonal modification}
\State           $\chi\jd = \chibar\jd + \sigma\jd/\norm{r\jdm1}^2$;                 \agap
                    $\psi\jd = \psibar\jd + \sigma\jd r\jdm1/\norm{r\jdm1}^2$;
\State           $\cgstep\jdm1 = -r\jm1\T q\jdm1/\big(q\jm1\T Hq\jdm1 + \chi\jd \norm{r\jdm1}^4\big)$;
\State           $r\jd = r\jdm1 + \cgstep\jdm1 \big(Hq\jdm1 - \norm{r\jdm1}^2\psi\jd\big)$
\State           $q\jd = -r\jd + \left(q\jm1\T Hr\jd/\big(q\jm1\T Hq\jdm1 + \chi\jd\norm{r\jdm1}^4\big)\right)q\jdm1$;
              \Else
\State           $\cgstep\jdm1 =  \cgstepbar\jdm1$; \agap $r\jd          =  \rbar\jd$;
              \EndIf
\State        $p\jd = p\jdm1 + \cgstep\jdm1 q\jdm1$;
\State        \IF{} \ent{\norm{r\jd} \le \tau\subtol} \ent{\BREAK};
\State        $j \gets j+1$;
           \EndWhile
\State     $p = p_j$;
       \EndProcedure
  \end{algorithmic}
\end{algorithm}

\subsection{Equivalence of the modified Lanczos and gradient-based formulations}
It has been shown that Nash's two-stage modification may be integrated into
a gradient-based CG framework. It remains to show that the 
quantities recovered from the provisional gradients produce the same
modifications as those generated by the Lanczos-based method.

The iterates generated by Algorithm~\ref{algUC:Gradient-modCG} depend on
the accumulated modification of the Hessian. At iteration $j$, the
two-stage procedure introduces additional curvature only in the current
gradient direction $r\jdm1$ and in the direction of the provisional
gradient $\rbar\jd$. Consequently, the search directions $q_0$, $q_1$,
\dots, $q_{j-2}$, the step lengths $\cgstep_0$, $\cgstep_1$, \dots,
$\cgstep_{j-2}$, and the iterates $p_0$, $p_1$, \dots, $p\jdm1$, remain
unchanged. The completed modification in the direction $r\jdm1$ determines
the current step length $\cgstep\jdm1$, and hence the next iterate
\[
  p_j = p\jdm1+\cgstep\jdm1 q\jdm1.
\]
If the algorithm terminates at iteration $m$, i.e., $\norm{r\md} <
\tau\subtol$, then the modified Hessian on completion of iteration $j$
($j<m$) is
\begin{equation} \label{eqUC:ModCG-modified-Hessian}
  \Htilde\jd
   = H + \sum_{i=1}^j \frac{\sigmabar\id+\sigma\id} {r_{i-1}\T r\idm1} r\idm1 r_{i-1}\T
                                + \frac{\sigmabar\jp1}{r_j\T r\jd} r\jd r_j\T.
\end{equation}
It follows that the iterates $p_0$, $p_1$, \dots, $p_j$, generated by
Algorithm~\ref{algUC:Gradient-modCG} coincide with the iterates generated
through iteration $j$ by Algorithm~\ref{algUC:CG-classical-cg} applied to
$\Htilde\jd$.  Therefore, equivalence of the Lanczos and gradient-based
modifications may be established by showing the equivalence of the modified
Hessian, which is done in the following proposition.

\begin{proposition}
Suppose that {\rm Algorithms~\ref{algUC:Lanczos-modCG}}
and~{\rm\ref{algUC:Gradient-modCG}} are applied in exact arithmetic to the
same symmetric matrix $H$ and vector $g$, and assume that the algorithms
use the same rule for selecting the target curvatures. Then, at every
completed inner iteration, the two algorithms generate the same
modifications and hence the same modified Hessian. Consequently, the
corresponding search directions are parallel and the iterates are
identical.
\end{proposition}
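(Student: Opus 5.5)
The plan is induction on the inner iteration index $j$. The inductive hypothesis $\mathcal{P}_j$ has three parts:
\begin{enumerate}
\item[(a)] the normalized gradients used by Algorithm~\ref{algUC:Gradient-modCG} coincide up to sign with the Lanczos vectors, i.e.\ $r_{i-1}/\norm{r_{i-1}} = \pm v_i$ for $i \le j$, and $\rbar_j/\norm{\rbar_j} = \pm v_{j+1}$;
\item[(b)] the scalars $\gamma_i$, $\abs{\beta_i}$, $\dbar_i$, $\sigma_i$ and $\sigmabar_i$ computed by the two algorithms agree for $i\le j+1$;
\item[(c)] consequently the modified Hessians (\ref{eqUC:Lanczos-modified-Hessian}) and (\ref{eqUC:ModCG-modified-Hessian}) are equal.
\end{enumerate}
Part (c) follows from (a) and (b) because $v_iv_i\T = r_{i-1}r_{i-1}\T/\norm{r_{i-1}}^2$ is insensitive to sign. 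Note also that \algname{modT} depends on $\beta_{j+1}$ only through $\beta_{j+1}^2$ and $\abs{\beta_{j+1}}$, and so does Algorithm~\ref{algUC:Nash-minD}: in the \textbf{Eigen} rule the eigenvalues of $B_j$ are invariant under $\beta\mapsto-\beta$. Hence sign discrepancies between the two off-diagonal conventions are harmless, and it suffices to match magnitudes. The base case holds because $v_1=-g/\norm{g}$ and $r_0=g$ give the same $\gamma_1$, and therefore the same $\sigmabar_1$ and $\dbar_1$.

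For the inductive step, assume $\mathcal{P}_{j-1}$, so both methods currently work with the same provisional matrix $\Htilde_{j-1}$, which contains $\sigmabar_j$ in direction $v_j \parallel r_{j-1}$. The key observation is that each $\Htilde$ differs from $H$ only on $\Span\{v_1,\dots,v_{j}\}$ (respectively up to $v_{j+1}$), which is the Krylov space already generated. Hence the Krylov/Lanczos process for $\Htilde_{j-1}$ started from $g$ produces the same vectors $v_1,\dots,v_{j+1}$ as the process for $H$, with tridiagonal entries shifted only on the diagonal by the accumulated $\sigma_i+\sigmabar_i$. Off-diagonals are unchanged, since $v_{i+1}\T(\sigma v_iv_i\T)v_i=0$.

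Classical CG applied to the positive definite projection of $\Htilde_{j-1}$ produces gradients orthogonal to the previous ones and lying in the next Krylov space. So the provisional gradient $\rbar_j$ computed in Algorithm~\ref{algUC:Gradient-modCG} is a multiple of $v_{j+1}$, which gives (a) at level $j$. From (a), the gradient-based formulas yield $\gamma_{j+1}=\rbar_j\T H\rbar_j/\norm{\rbar_j}^2 = v_{j+1}\T Hv_{j+1}$ and $\beta_{j+1} = \pm v_{j+1}\T H v_j$. These are exactly the Lanczos entries of $H$ (not of $\Htilde$); this is the point of using $H$ rather than $\Htilde$ in those formulas, and it must be checked against the Lanczos algorithm, which also uses the unmodified $\gamma_{j+1}$ and $\beta_{j+1}$. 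The two calls to \algname{modT} therefore receive identical arguments, up to the sign of $\beta$, and return identical $\sigma_j$, $\sigmabar_{j+1}$ and $\dbar_{j+1}$, giving (b) and (c).

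Next, Proposition~\ref{prop:parallel-gradients} is applied with $p=p_{j-1}$, $r=r_{j-1}$, $q=q_{j-1}$ and $\sigma=\sigma_j$. Its hypotheses hold: $r_{j-1}\perp p_{j-1}$ because $p_{j-1}$ lies in the span of the previous gradients, and $q\T r=-r\T r$ is the standard CG identity. The proposition shows that the completed gradient $r_j$ is a positive multiple of $\rbar_j$, so $r_j \parallel v_{j+1}$ persists after the second-stage modification. It also shows that $p_j$ is the CG iterate for the completed $\Htilde_j$. The representation through $\psi_j$ and $\chi_j$ derived above confirms that Algorithm~\ref{algUC:Gradient-modCG} really computes the CG quantities for $\Htilde_j$. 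For the conclusion, both algorithms are CG on the same matrix $\Htilde_j$ restricted to the same Krylov spaces: in exact arithmetic they minimize the same quadratic over the same subspaces. Therefore the iterates coincide, and the search directions differ only by scaling, namely by the sign and normalization differences between $q_i = v_{i+1} - l_{i+1}q_{i-1}$ and the CG directions.

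The main obstacle is the claim that the gradients and Krylov spaces generated with the evolving modified matrices coincide with those of the Lanczos process on $H$. The modifications change between iterations, so one must verify that the first-stage modification in direction $r_{j-1}$ and the subsequent second-stage one do not alter earlier directions or conjugacy. I would handle this by showing that on $\Span\{v_1,\dots,v_{j+1}\}$ the matrix $V\T\Htilde V$ equals the modified tridiagonal $T+E$ in both formulations. I would then use uniqueness of CG iterates for a given positive definite projected matrix and starting vector, and invoke Proposition~\ref{prop:parallel-gradients} only to handle the one-step lag caused by the provisional gradient.
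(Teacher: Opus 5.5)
Your proposal is correct and follows essentially the same induction as the paper. The base case identifies $v_1$ with $-r_0/\norm{r_0}$. The inductive step shows that the provisional gradient computed with $\Htilde$ is parallel to the next Lanczos vector of $H$, so \texttt{modT} receives identical data and returns identical modifications.

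What you add fills in details that the paper's proof leaves implicit:
\begin{itemize}
\item the sign-invariance of \texttt{modT} and \texttt{posCurve} in $\beta_{j+1}$;
\item the check that Lanczos applied to $\Htilde$ reproduces the Lanczos vectors of $H$, with only diagonal shifts;
\item the explicit use of Proposition~\ref{prop:parallel-gradients} to pass parallelism from $\rbar_j$ to the completed gradient $r_j$.
\end{itemize}
State that last fact, $r_j \parallel v_{j+1}$, as part of your inductive hypothesis, since the next step relies on $r_{j-1}\parallel v_j$.
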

\begin{proof}
The proof is by induction on the inner iteration. At the initial iteration,
the first Lanczos vector satisfies
\[
   v_1 = -\frac1{\norm{g}} g =-\frac1{\norm{r_0}} r_0.
\]
Thus, $v_1$ and the first normalized CG gradient are parallel. The
initial curvature is the same in both formulations, i.e.,
\[
  \gamma_1 = v_1\T Hv\dr1 = \frac{r_0\T H r\dr0}{\norm{r_0}^2}.
\]
The value of $\sigmabar_1$ and the corresponding rank-one modification are
therefore identical.  Assume that the two formulations have generated
identical modifications through the completed modified Hessian at iteration
$j$, and that the normalized CG gradients are parallel to the corresponding
Lanczos vectors. The provisional gradient $\rbar\jdp1$ is computed using
$\Htilde\jd$. By equivalence of the Lanczos and gradient-based method of
conjugate gradients, $\rbar\jdp1$ is parallel to the Lanczos vector
$v\jdp2$ generated using $H$. It follows that the quantities
computed from the provisional gradient agree with the corresponding
Lanczos quantities:
\[
  \gamma\jp2 = \frac{\rbar\jp1\T H\rbar\jdp1}{\norm{\rbar\jdp1}^2}
             = v\jp2\T H v\jdp2, \qquad
   \beta\jp2 = \frac{\rbar\jp1\T H r_j} {\norm{\rbar\jdp1}\norm{r_j}} = v\jp2\T H v\jdp1.
\]
Hence the two formulations apply
Algorithm~\ref{algUC:New-Nash-modification} to the same tridiagonal data
and select the same target curvature. They therefore compute the same
values of $\sigma\jdp1$ and $\sigmabar\jp2$, implying that the modifications
introduced by the two algorithms are identical. This proves that the
completed modified Hessians agree at iteration $j+1$. The result follows by
induction. As the two algorithms generate identical modified Hessians,
their search directions are parallel and their exact line searches generate
identical iterates.
\end{proof}

The developments in this section show that Nash's two-stage modification
may be integrated into a gradient-based CG framework with the use of
provisional gradients to compute curvature information and the storage of
one extra vector and scalar to represent the accumulated modifications. In
exact arithmetic, the resulting modification generates an identical
modified Hessian as the Lanczos-based formulation in
Algorithm~\ref{algUC:Lanczos-modCG}, as well as parallel search directions
and identical iterates. In finite precision, the quantities are recovered
using different recurrence relations. The behavior of the two formulations
in finite precision is compared in the following section.

\section{Numerical Results}\label{sec:numerical}
The results were obtained for optimization problems from the \CUTEst{} test
collection (see Bongartz \etal{} \cite{BonCGT95} and Gould, Orban and
Toint~\cite{GouOT15}).  As of August 1, 2026, there are $291$ unconstrained
problems in the test set. A subset of $286$ problems was selected based on
the number of variables. In particular, a problem was chosen if the number
of variables was of the order of 5000 or less. The same criterion was used
to set the dimension of those problems for which the problem size can be
specified.  The 5 problems {\tt BA-L16LS}, {\tt BA-L21LS}, {\tt BA-L49LS},
{\tt BA-L52LS}, and {\tt BA-L73LS} have more than $23$K variables and were
excluded from the test set. A total of 200 of
the 286 problems exhibited an indefinite Hessian.

The runs were performed using \textsc{Matlab} version
R2024a on a System76 Prime Elite  Linux machine running Ubuntu 22.04 with a
Ryzen 9 9950X (16 Cores, 32 Threads) and 48 GB DDR5 RAM (5600 MHz).
The results were obtained using a \textsc{Matlab} implementation of
Algorithm~\ref{algUC:modified-Newton}, which implements a conventional
line-search modified Newton method with a first-order termination condition.  The method uses a
backtracking line search based on the affine model $\Gammait_k(x) = f(x_k) +
\Grad f(x_k)\T (x - x_k)$.

\begin{algorithm}[htb]
\caption{\label{algUC:modified-Newton}
\hspace{-3pt}.\mgap Modified Newton  method.\algstrut}

\begin{algorithmic}[0]
\State  \Constants{$j\submax$, $k\submax$; $0 < \eta\A < \half$;  $0<\gamma\C < 1$;\Astrutu}
\State  $k= 0$;
\State  Choose $x_0$; \agap   Evaluate $f(x_0)$ and $\Grad f(x_0)$;
        \While{$k \le k\submax \AND  \NOT{} \converged$}
\State    Use an iterative method to compute a search direction $p_k$; 
\State    $\alpha_k = 1$; \agap $\ell_k = 0$;
\State    $\rho_k   = \big(f(x_k) - f(x_k+\alpha_k p_k)\big)/
                            \big(\Gammait_k(x_k) - \Gammait_k(x_k+\alpha_k p_k)\big)$;
          \While{$\rho_k < \eta\A$}
\State      $\alpha_k \gets \alpha_k\gamma\C$; \agap $\ell_k \gets \ell_k + 1$;
\State      $\rho_k = \big(f(x_k) - f(x_k+\alpha_k p_k)\big)/
                            \big(\Gammait_k(x_k) - \Gammait_k(x_k+\alpha_k p_k)\big)$;
          \EndWhile
\State    $x\kp1 = x_k + \alpha_k p_k$; \agap
             Evaluate $\Grad f(x\kp1)$;
\State    $k\gets k+1$;
        \EndWhile
\end{algorithmic}
\end{algorithm}

For the proposed methods to be computationally viable for large-scale
problems, it is important to limit the number of inner iterations. Carrying
the Lanczos process to completion at every outer iteration may require a
large number of Hessian-vector products and may therefore incur a
substantial computational cost. Nevertheless, the curvature information
accumulated during the initial Lanczos iterations may be sufficient to
produce a more effective search direction than the negative gradient. Early
termination may thus reduce the cost of computing the search direction
while retaining some of the potential advantages of a Krylov-subspace
method. In the following, we compare three criteria for terminating the
Lanczos process before completion.

Nash~\cite{Nas84} and Nash and Sofer~\cite{NasS90} propose a criterion
based on the reduction in the quadratic objective function:
\begin{equation} \label{eqUC:early-term-Obj}
  j (\objQPtilde_j - \objQPtilde\jm1)/\objQPtilde_j \le \eta\obj.
\end{equation}
(In Appendix~\ref{secUC:objective} it is shown how the objective function
$\objQPtilde$ can be computed without explicit knowledge of the modified
Hessian).  Dembo, Eisenstat, and Steihaug~\cite{DemES82} propose
alternative criteria based on the relative residual norm
\begin{equation} \label{eqUC:early-term-Res}
   \norm{r_j} \le \eta\res \norm{g},
\end{equation}
and  the dynamic reduction of the relative norm of the
residual
\begin{equation} \label{eqUC:early-term-DynRes}
       \norm{r_j} \le \min\Set{ \half,\norm{g} } \norm{g}.
\end{equation}
This last criterion becomes progressively more stringent as $\norm{g}$
decreases.  The termination conditions (\ref{eqUC:early-term-Obj}),
(\ref{eqUC:early-term-Res}) and (\ref{eqUC:early-term-DynRes}) provide
alternative mechanisms for controlling the number of Lanczos iterations
and, consequently, the number of Hessian-vector products used to compute
the search direction.

In order to satisfy the conditions required for global convergence of the
modified Newton method, two additional early-termination conditions are
imposed.  The first is based on the magnitude of $\norm{p_j}$, and the
second is based on the value of the normalized directional derivative $-g\T
p_j/(\norm{g}\norm{p_j})$.  These conditions utilize two fundamental
properties of the conventional CG method, namely, that the norm of $p_j$ is
monotonically increasing (Steihaug~\cite{Ste83}), and that the normalized
directional derivative is monotonically decreasing (Dai, Mart{\'\i}nez and
Yuan~\cite{DaiMY03}). These results apply to the \emph{modified} Lanczos
and gradient-based formulations because the associated search directions
are identical to those generated by the Lanczos and CG methods applied to a
positive-definite modified Hessian
(\cf~(\ref{eqUC:Lanczos-modified-Hessian}) and
(\ref{eqUC:ModCG-modified-Hessian})). The specific termination criteria are
\[
   \norm{p_j} \ge p\submax   \wordss{and}
   -\frac{g\T p_j}{\norm{g}\norm{p_j}} \le c\submin.
\]

Figure~\ref{runs-Nash-extensions-6000} gives the performance profiles for
the Lanczos-based and CG-based methods based on Nash's original method and
the two proposed extensions. The profiles also include results from the
Lanczos trust-region method \GLTR{} (Gould \etal~\cite{GouLRT99}).  The
left figure gives the profiles for the number of function evaluations. The
right figure gives the profiles for the number of Hessian-vector
products. The iteration limit is $n+1$ in all cases.

\begin{figure}[ht]
\begin{center}
\includegraphics[width=6.5cm,height=6.5cm]{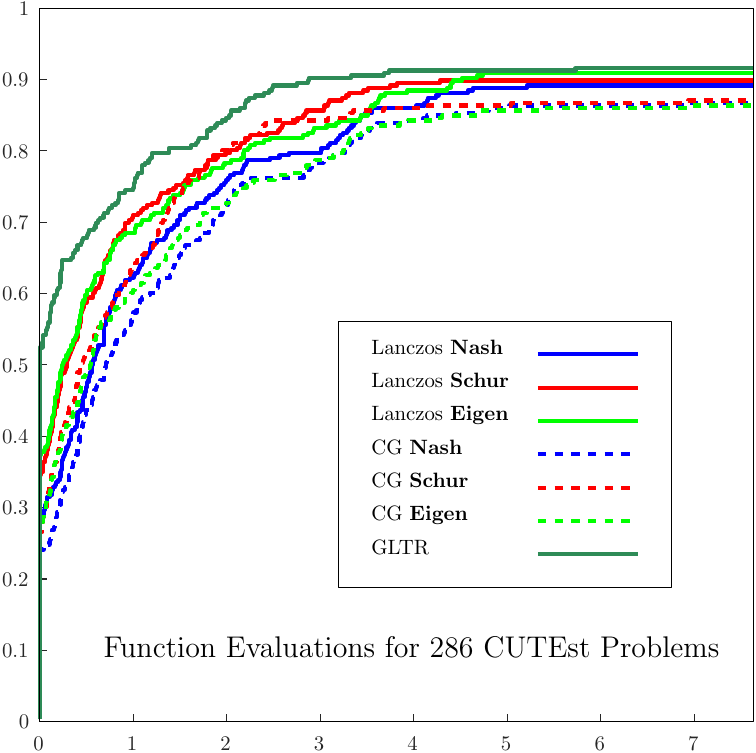}\mgap
\includegraphics[width=6.5cm,height=6.5cm]{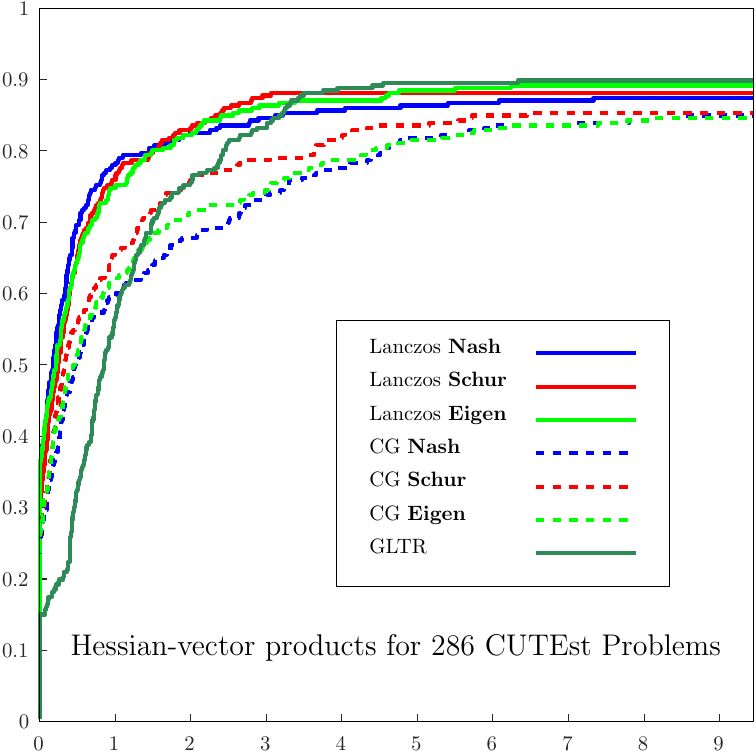}
\caption{\label{runs-Nash-extensions-6000}
\footnotesize Performance profiles for the Lanczos-based and CG-based
methods based on Nash's original method and the two proposed extensions.
The profiles also include results from the Lanczos trust-region method
GLTR.  The iteration limit is $n+1$ in all cases. The methods are applied
to 286 unconstrained problems from the CUTEst test set. The left figure
gives the profiles for the number of function evaluations. The right figure
gives the profiles for the number of Hessian-vector products.}
\end{center}
\end{figure}

\begin{figure}[ht]
\begin{center}
\includegraphics[width=6.5cm,height=6.5cm]{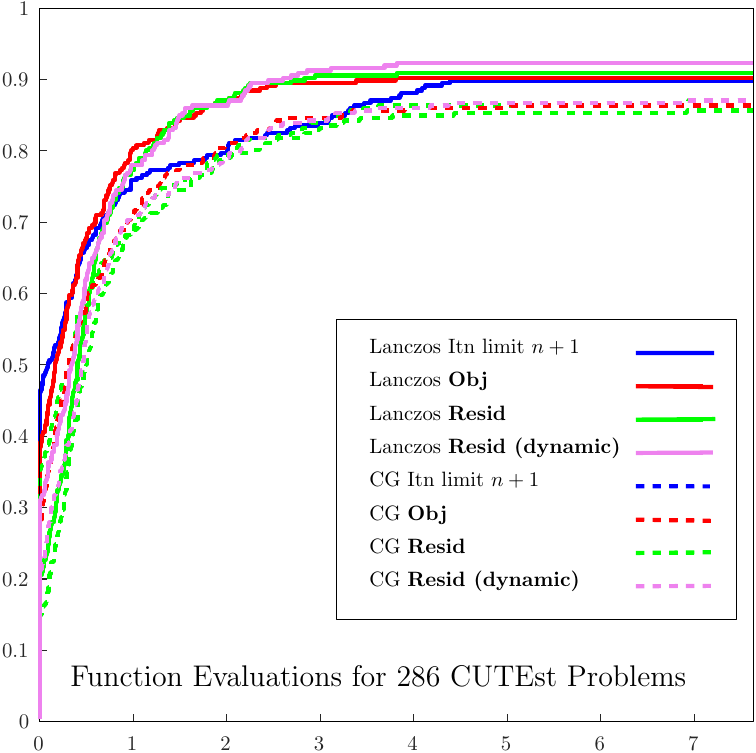}\mgap
\includegraphics[width=6.5cm,height=6.5cm]{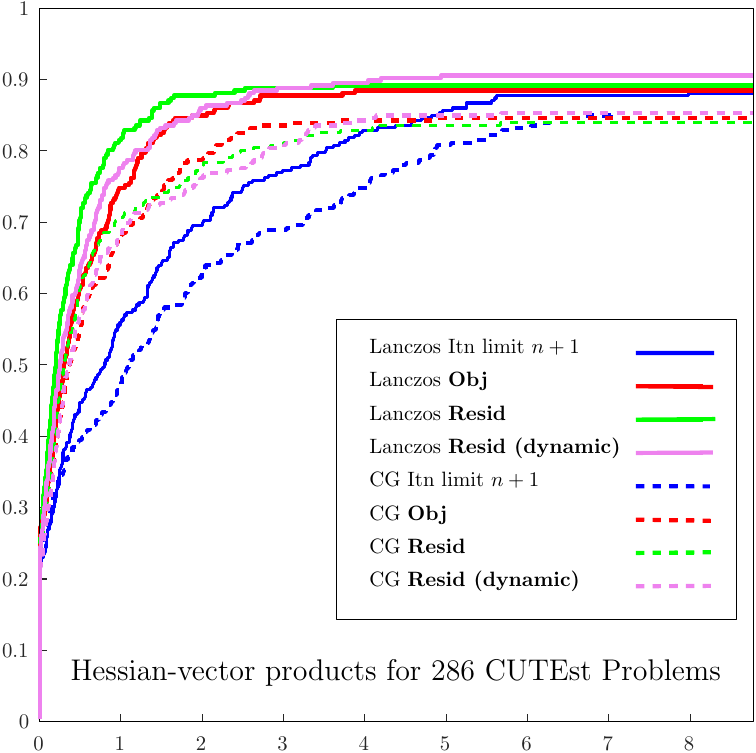}
\caption{\label{runs-All-Early}
\footnotesize Performance profiles for the Lanczos-based and CG-based
methods using the Schur-complement strategy with three early-termination
strategies (\ref{eqUC:early-term-Obj})--(\ref{eqUC:early-term-DynRes}),
labeled \texttt{Obj}, \texttt{Resid} and \texttt{Resid (dynamic)},
respectively. The parameters and tolerances for the methods with early
termination are given in Table~\ref{tabUC:params}.  The methods are applied
to 286 unconstrained problems from the CUTEst test set. The left figure
gives the profiles for the number of function evaluations. The right figure
gives the profiles for the number of Hessian-vector products.}
\end{center}
\end{figure}

A run was considered to be a success based on two criteria.  The first was
\[
  \norm{\Grad f(x_k)} \le \varepsilon\subS\big(1 + \mod{f(x_k)}\big)
  \wordss{or}
  \norm{\Grad f(x_k)} \le \epsilon\M^{1/2},
\]
where $\varepsilon\subS$ is a preassigned stationarity tolerance and
$\epsilon\M$ is the relative machine precision. If the problem is badly
scaled, a line search failure may occur before these conditions can be
satisfied.  In this case the run was considered a success if
\[
  \norm{\Grad f(x\kd)} \le  \epsilon\M^{1/2} \norm{\Grad f(x\dr0)}
  \wordss{or}
  \mod{f(x\kd)} \le \epsilon\M^{2/3}\big(1 + \mod{f(x\dr0)}\big).
\]
The specific values of the parameters used in the runs are given in
Table~\ref{tabUC:params}.
\begin{table}[htb]
\begin{center}
\begin{footnotesize}
    \begin{tabular}{|l|c|l|c|}\hline
    \multicolumn{1}{|l|}{\textbf{\strutu Parameter}}&
    \multicolumn{1}{c}{\tbc{Parameter}{\textbf{Value}}}&
    \multicolumn{1}{|l|}{\textbf{Parameter}}&
    \multicolumn{1}{c|}{\tbc{Parameter}{\textbf{Value}}}\newstrut \\ \hline \newstrutu
      Positive curvature threshold, $\delta$     & \texttt{ 1.0e-10 } &                                                           &                   \\
      Stationarity tolerance, $\varepsilon\subS$ & \texttt{ 1.0e-05 } & Lanczos tolerance, $\tau\subtol$                          & \texttt{ 1.0e-07 }\\
      Lanczos iteration limit, $j\submax$        & \texttt{ 6000 }    & Newton iteration limit, $k\submax$                        & \texttt{ 3000 }   \\
      Armijo step reduction factor,  $\gamma\C$  & \texttt{ 0.5 }     & Armijo step tolerance, $\eta\A$                           & \texttt{ 1.0e-04 }\\
      Early termination factor, $\eta\obj$       & \texttt{ 0.01 }    & Early termination factor, $\eta\res$                      & \texttt{ 0.5 }    \\
    Lower bound on $-g\T p_j/(\norm{g}\norm{p_j})$, $c\submin$& \texttt{ 0.01 } & Upper bound on $\norm{p_j}$, $p\submax$    & \texttt{ 1.0e+20 }\newstrutl   \\\hline
   \end{tabular}
\parbox[t]{.925\textwidth}{\caption{\label{tabUC:params}\small Constants and tolerances used across all problems and solvers.}}
\end{footnotesize}
\end{center}
\end{table}

First, the Lanczos implementation exhibits a small but consistent advantage
over the conjugate-gradient implementation. Although the performance
profiles are similar, the Lanczos-Nash method solves slightly more problems
and achieves slightly better overall efficiency. Second, the trust-region
method \GLTR{} appears to be the most robust method in terms of the total
number of problems solved. However, this increased robustness is generally
accompanied by a larger number of Hessian-vector products, indicating a
higher cost per solve.

The results also suggest that the choice of target curvature has a
noticeable effect on performance. Both adaptive strategies outperform the
fixed-curvature approach proposed by Nash over a substantial portion of the
test set. Among the adaptive variants, the Schur-complement strategy
(\textbf{Schur}) provides the strongest overall performance profile, whereas the
eigenvalue-based strategy (\textbf{Eigen}) solves a few additional problems and
appears to be slightly more robust. Overall, the results indicate that
exploiting iteration-dependent curvature information can improve efficiency
without sacrificing robustness.

Finally, the performance profiles in Figure~\ref{runs-All-Early} show that
early termination of the inner iteration is beneficial for all the
modification strategies considered. In each case, the use of an
early-termination criterion improves the balance between function
evaluations and Hessian-vector products when compared with carrying the
inner iteration to completion. This suggests that the curvature information
obtained during the initial Lanczos iterations is often sufficient to
compute an effective search direction, and that additional inner iterations
may provide only limited practical benefit.

\section{Conclusions}\label{secUC:conclusion}
A modified Newton method for large-scale unconstrained optimization has
been proposed in which two distinct curvature parameters are used. The
first determines when modification of the Hessian is required, whereas the
second specifies the target curvature imposed by the
modification. Separating these roles provides greater flexibility than
using a single parameter for both detection and modification. At the same
time, the proposed method retains a bound on the difference between the
true Hessian and the modified Hessian, thereby controlling the magnitude of
the modification.

Two iteration-dependent choices of the target curvature have been
proposed. These choices allow the imposed curvature to vary over the course
of the optimization process, rather than remaining fixed at all
iterations. We have also derived a conjugate-gradient-type implementation
of the modification that is theoretically equivalent to its Lanczos
formulation. The conjugate-gradient formulation avoids the explicit
construction of the Lanczos basis and provides an alternative
implementation using Hessian-vector products.

The different modification strategies have been evaluated numerically on a
collection of unconstrained optimization problems. The two proposed
iteration-dependent choices of the target curvature are compared with the
fixed target-curvature choice proposed by Nash. These comparisons are used
to examine the computational performance and robustness of the resulting
methods, and to investigate the effect of allowing the target curvature to
vary between iterations.

Three strategies for the early termination of the Lanczos iterations and
conjugate-gradient iterations have been considered.  Carrying the inner
iteration to completion may require a large number of Hessian-vector
products on large-scale problems, whereas the curvature information
obtained during the initial iterations may be sufficient to produce an
effective search direction.  Numerical experiments indicate that the use of
early-termination criteria can limit the cost of the inner iteration while
retaining the potential benefits of the modified Krylov-subspace direction.

\bibliographystyle{plain}
\bibliography{references}

\appendix

\section{Computing the Objective of the Modified Quadratic}\label{secUC:objective}
If the Lanczos-Nash algorithm terminates at iteration $m$, the final
modified Hessian $\Htilde_{m-1}$ may be written in the form
\[
  \Htilde\mdm1 = H + \sum_{i=1}^{m-1}(\sigma\id
                + \sigmabar\id) v\id v_i\T
                + \sigmabar_m\drop v_m\drop v_m\T,
\]
where $\Set{\sigma\id}_{i=1}^{m-1}$ and $\Set{\sigmabar\id}_{i=1}^m$ are
the diagonal modifications generated by
Algorithm~\ref{algUC:New-Nash-modification}.

An apparent difficulty of working with $\objQPtilde(p)$ is that the
quadratic term involves the final approximate Hessian $\Htilde_m$. The
following result shows that only the current modified Hessian is needed.

\begin{result}\label{resUC:H-equiv}
If $j < m$, then the matrix $\Htilde_{m-1}$ acts on $p_j$ as
\[
  H + \sum_{i=1}^j(\sigma\id + \sigmabar\id)v\id v_i\T,
\]
i.e., both matrices have the same action on  $p_j$.
\end{result}
\begin{proof}
As $p_0=0$, it follows that $p_j \in \Span\Set{v_1, v_2, \dots, v_j }$. The
orthogonality of the Lanczos vectors implies that
\begin{align*}
 \Htilde\mdm1 p\jd &= Hp\jd + \sum_{i=1}^{m-1}(\sigma\id + \sigmabar\id) v\id v_i\T p\jd
                         + \sigmabar_m\drop v_m\drop v_m\T p\jd \\
                 &= Hp\jd + \sum_{i=1}^j (\sigma\id + \sigmabar\id)v\id v_i\T p\jd
                  = \Big(H + \sum_{i=1}^j (\sigma\id + \sigmabar\id)v\id v_i\T \Big) p\jd,
\end{align*}
as required.
\end{proof}

\begin{result}\label{resUC:A2}
Given the iterates $p_0$, $p_1$, \dots, $p_j$ associated with the
Lanczos-Nash algorithm, it holds that
\[ 
  \objQPtilde(p\jdp1) =  - \tfrac12 p\jp1\T \Htilde\mdm1 p\jdp1
                      =    \objQPtilde(p\jd) - \tfrac12\cgstep_j\norm{\rtilde\jd},
\] 
where $p\jp1 = p_j + \cgstep_j q_j$ and $\rtilde_j = g + \Htilde\mdm1 p_j$.
\end{result}
\begin{proof}
If $\rtilde_j = \Grad \objQPtilde(p_j) = g + \Htilde_{m-1} p_j$, then
$\rtilde_j = \Big(H + \sum_{i=1}^j (\sigma\id + \sigmabar\id)v\id v_i\T \Big) p\jd$, as the iterates
of the Lanczos-Nash algorithm minimize $\objQPtilde(p)$ over a sequence of
expanding subspaces, it must hold that $p\jp1$ is orthogonal to $\Grad
\objQPtilde(p\jp1)$, i.e.,
\[
  0 = p\jp1\T(g + \Htilde\mdm1 p\jdp1) = \objQPtilde(p\jdp1) + \tfrac12 p\jp1\T \Htilde\mdm1 p\jdp1,
\]
which implies that $\objQPtilde(p\jdp1) = - \tfrac12 p\jp1\T \Htilde\mdm1
p\jdp1$.  The scalar $p\jp1\T \Htilde\mdm1 p\jdp1$ may be written in the form
\begin{align*}
  p\jp1\T \Htilde\mdm1 p\jdp1
    &= (p_j + \cgstep_j q_j)\T \Htilde\mdm1 (p_j + \cgstep_j q_j)\\
    &=  p_j\T \Htilde\mdm1 p\jd + 2\cgstep_j q_j\T \Htilde\mdm1 p\jd +  \cgstep_j^2 q_j\T \Htilde\mdm1 q\jd.
\end{align*}
As $p_j \in \Span\Set{ q_0, q_1, \dots, q\jm1 }$, the vector $q_j$ is
conjugate to $p_j$ and $p_j\T \Htilde\mdm1 q\jd = 0$.  It follows that
\[
  p\jp1\T \Htilde\mdm1 p\jdp1
     =  p_j\T \Htilde\mdm1 p\jd + \cgstep_j^2 q_j\T \Htilde\mdm1 q\jd.
\]

Given that $\cgstep\jd \Htilde_{m-1} q_j = \rtilde\jp1 - \rtilde\jd$, the
exact line search gives $q_j\T \rtilde\jdp1 = 0$ and it follows that $q_j\T
\rtilde\jd = -\norm{\rtilde\jd}$. Therefore we have that
\[
 \objQPtilde(p\jp1)
    = \objQPtilde(p_j) - \tfrac12 \cgstep_j\norm{\rtilde_j},
\]
which is what we wanted to show.
\end{proof}

Result~\ref{resUC:A2} shows that  updating the objective function value requires the norm of the
gradient $\rtilde\jd$. By Result~\ref{resUC:H-equiv} the norm of the
gradient $\rtilde\jd$ is equivalent to the gradient norm at $p\jd$ using
the modified Hessian at iteration $j$, i.e., $\norm{\rtilde\jd} =
\abs{\beta\jdp1}\cgstep\jdm1$.

\end{document}